\newtheorem*{conj*}{Conjecture}
\newtheorem{theorem}{Theorem}[section]
\theoremstyle{definition}
\theoremstyle{plain}
\newtheorem{cor}[theorem]{Corollary}
\newtheorem{lemma}[theorem]{Lemma}
\newtheorem{prop}[theorem]{Proposition}
\newtheorem*{rmk}{Remark}
\newtheorem*{dfn}{Definition}
\newcommand{\Z}{\mathbb{Z}}
\newcommand{\R}{\mathbb{R}}
\newcommand{\C}{\mathbb{C}}
\numberwithin{equation}{section}
\newtheoremstyle{example}
  {\topsep}   
  {\topsep}   
  {\normalfont}  
  {0pt}       
  {\bfseries} 
  {.}         
  {5pt plus 1pt minus 1pt} 
  {}          
\theoremstyle{example}
\def\({\left(}
\def\){\right)}
\title{Annular Links from Thompson's Group $T$}
\author{Louisa Liles}
\address{Department of Mathematics, University of Virginia, Charlottesville, VA 22904}
\email{lml2tb@virginia.edu}
\thanks{The author acknowledges partial suport from the NSF RTG Grant DMS-1839968 and NSF Grant DMS-2105467}
\begin{document}
\maketitle 
\begin{abstract} In 2014 Jones showed how to associate links in the $3$-sphere to elements of Thompson's group $F$. We provide an analogue of this program for annular links and Thompson's group $T$.
The main result is that any edge-signed graph embedded in the annulus is the Tait graph of an annular link built from an element of $T$. In analogy to the work of Aiello and Conti \cite{aielloconti}, we also show that the coefficients of certain unitary representations of $T$ recover the Jones polynomial of annular links.
\end{abstract} 
\section{Introduction and Statement of Main Results}
Vaughan Jones introduced a method of constructing links in the $3$-sphere from elements of the Thompson group $F$, which are piecewise linear orientation-preserving self-homeomorphisms of the unit interval \cite{jones14,jones18}. Jones proved that the Thompson group $F$ gives rise to all link types in the $3$-sphere, suggesting that it can be used as an analogue of braid groups for producing links \cite[Theorem $5.3.1$]{jones14}. 

This paper provides a method for building links in the thickened annulus $\mathbb{A} \times I$ from Thompson's group $T$, which contains $F$ and whose elements are piecewise-linear orientation-preserving self-homeomorphisms of $S^{1}$. This method recovers Jones' construction for the subgroup $F$.

Given $g \in T$, one can follow the processs introduced in Section \ref{sec: construction} to build an annular link $\mathcal{L}_{\mathbb{A}}(g)$. On the other hand, given an edge-signed graph 
$\Gamma \hookrightarrow \mathbb{A}$, one can construct a diagram of an annular link $L_{\mathbb{A}}(g)$ in analogy with Tait's construction of links from planar graphs; see Figure \ref{fig: tait graph ex}. Jones proved that given any Tait graph $\Gamma \in \mathbb{R}^{2}$, there is some $g \in F$ which produces the same link as $\Gamma$. The following theorem states that the same is true for annular links and $T$:  
\begin{theorem}\label{thm: main}
    Let $\Gamma \hookrightarrow \mathbb{A}$ be an edge-signed embedded graph. Then there exists some $g \in T$ such that $\mathcal{L}_{\mathbb{A}}(g)$ is isotopic in $\mathbb{A} \times I$ to $L_{\mathbb{A}}(\Gamma)$.
\end{theorem}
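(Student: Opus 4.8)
The plan is to reverse the construction of Section~\ref{sec: construction}: starting from the edge-signed graph $\Gamma \hookrightarrow \mathbb{A}$, I would build an explicit element $g \in T$ whose annular link $\mathcal{L}_{\mathbb{A}}(g)$ has $\Gamma$ as its Tait graph. Since $L_{\mathbb{A}}(\Gamma)$ is by definition the link obtained from $\Gamma$ by the annular Tait construction, realizing $\Gamma$ as the Tait graph of $\mathcal{L}_{\mathbb{A}}(g)$ immediately yields the required isotopy in $\mathbb{A} \times I$. The first and most important step is to make the dictionary between $g$ and the Tait graph of $\mathcal{L}_{\mathbb{A}}(g)$ fully combinatorial. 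An element of $T$ is encoded by a pair of planar binary trees with a common number of leaves together with a cyclic matching of those leaves — the datum recording the part of $T$ lying outside $F$. I would spell out how the vertices, edges, and edge-signs of the checkerboard graph of the associated annular diagram are read off from this tree-pair-with-rotation, so that the theorem becomes the purely combinatorial assertion that every edge-signed annular graph occurs.

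To reduce to Jones' planar result, I would cut the annulus along a radial arc to obtain a rectangle, carrying $\Gamma$ to a planar edge-signed graph $\widetilde{\Gamma}$ together with a finite collection of ``seam crossings'' recording the edges of $\Gamma$ that meet the cut. Jones' realization theorem (\cite[Theorem $5.3.1$]{jones14}), applied to $\widetilde{\Gamma}$, produces an element of $F$, i.e.\ a tree-pair, realizing the planar part; re-gluing the rectangle into the annulus then promotes this tree-pair to an element of $T$ whose rotation number is dictated by the seam crossings. The winding of $\Gamma$ around the core of $\mathbb{A}$ is thereby encoded in the cyclic matching of leaves, which is precisely the extra freedom that $T$ provides over $F$.

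The edge-signs require a separate but routine argument. In the Tait correspondence the sign of an edge is the sign of the corresponding crossing, a local datum that can be toggled by a local modification of the trees (or by post-composition with an elementary element of $T$ flipping that crossing) without changing the underlying graph. I would therefore first realize the unsigned graph and then correct the signs edge-by-edge, mirroring the sign argument available in the planar case.

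The main obstacle is the homological content special to the annulus: I must verify that the rotation datum of the constructed $g$ correctly reproduces the winding class of $\Gamma$ in $H_1(\mathbb{A})$, and that the realizing isotopy can be carried out inside $\mathbb{A} \times I$ rather than merely in $\mathbb{R}^2 \times I$ — so that no strand slides across the core of the annulus and every component keeps its winding number. Concretely, the delicate point is that cutting, applying the planar result, and re-gluing must be compatible: the seam crossings produced by Jones' (planar) element must assemble into a genuine cyclic matching of the leaves. I would isolate this compatibility in a lemma identifying the rotation number of $g$ with the algebraic count of edges of $\Gamma$ crossing the radial cut, and showing that a cut can always be chosen meeting $\Gamma$ transversally so that this identification holds. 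Establishing that compatibility is where the real work lies.
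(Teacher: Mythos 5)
There is a genuine gap, and it sits exactly where you placed the ``real work.'' Cutting $\mathbb{A}$ along a radial arc severs every edge of $\Gamma$ that crosses the arc, so the object you obtain is not an edge-signed planar graph but a planar graph with dangling half-edges on the seam. Jones' realization theorem \cite[Theorem 5.3.1]{jones14} applies only to honest edge-signed graphs in $\mathbb{R}^{2}$, and the element of $F$ it produces has a Tait graph of a very rigid form (all vertices on a line, exactly one incoming lower edge at each non-initial vertex, signs dictated by which half-plane an edge lies in); it gives you no control whatsoever over where severed edge-ends would have to reattach after regluing. Worse, the extra datum that $T$ carries beyond $F$ is a single integer $k$ (which leaf of $S$ receives the first leaf of $R$), whereas your seam data is a whole collection of edges in prescribed positions. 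In the annular Tait graph $\Gamma_{\mathbb{A}}(g)$ the set of edges wrapping around the core is not freely prescribable: it is forced by the tree combinatorics (precisely the edges of $R$ that attach to edges lying to their left in $S$). So the ``compatibility lemma'' you defer is not a technical verification at the end --- it is the entire content of the theorem, and nothing in the cut-and-reglue setup suggests a mechanism for proving it.

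Your treatment of signs is also incorrect: in the construction of Section \ref{sec: construction}, upper edges of $\Gamma_{\mathbb{A}}(g)$ are always positive and lower edges always negative, so an edge-sign is \emph{not} a local datum that can be toggled ``without changing the underlying graph,'' and post-composing $g$ with another element of $T$ changes the whole link, not one crossing. A wrongly-signed edge can only be repaired by replacing it with a different local configuration that changes the graph but preserves the link type --- this is exactly why both Jones' Thompson Badness and the paper's Annular Thompson Badness carry the terms $|e^{up}_{-}|$ and $|e^{down}_{+}|$. The paper's actual proof avoids cutting altogether: it defines $ATB$, proves that $ATB(\Gamma)=0$ if and only if $\Gamma=\Gamma_{\mathbb{A}}(g)$ for some $g\in T$ (Proposition \ref{prop: ATB}), and then shows that any graph with $ATB(\Gamma)\neq 0$ is $2$-equivalent to a graph of strictly smaller $ATB$, with separate case analysis for the genuinely annular phenomena (edges stretching over the origin, the distinguished vertices $v_{L}$ and $v_{1}$). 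Induction then finishes the argument, since $2$-equivalence preserves the isotopy class of $L_{\mathbb{A}}(\Gamma)$ in $\mathbb{A}\times I$. If you want to salvage your outline, you would in effect have to redo this badness-reduction argument; the planar theorem cannot be invoked as a black box.
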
 

The construction of links in the $3$-sphere arose naturally in Jones' definition of certain unitary representations of $F$ and $T$ \cite{jones14}. The Kauffman bracket and Jones polynomial of links in the $3$-sphere were then shown to arise as coefficients of these unitary representations of $F$ \cite{aielloconti1,aiellocontijones} and T \cite{aielloconti}; this was accomplished by proving that they are functions of positive type. This paper establishes a similar result for annular links and $T$, which follows from the construction of links outlined in Section \ref{sec: construction}, and from \cite[Theorems $6.2$ and $7.4$]{aielloconti}. We now introduce some notation necessary to state the precise result. 

Elements of $T$ can be specified by triples $(R,S;k)$, where $R$ and $S$ are trees and $k$ is an integer; see Section \ref{sec: FT intro} for more details. When $R$, $S$ and $k$ are relevant, we will use  $\mathcal{L}_{\mathbb{A}}(R,S;k)$ to refer to the link resulting from the unique element $g \in T$ determined by $(R,S;k)$. Using this notation, we can establish the Jones polynomial of annular links as a function of positive type on $\vec{T}$, the oriented subgroup of $T$ which was first introduced by Jones in \cite{jones14} and is further discussed in Section \ref{sec: construction}.

\begin{cor}\label{thm: jones rep} For $g=(R,S;k) \in \vec{T}$, let $n$ be the number of leaves in $R$, and let $V_{\mathcal{L}}^{\mathbb{A}}(t)$ denote the Jones polynomial of an annular link $\mathcal{L}$, where unknotted curves wrapping once around $\mathbb{A}$ are equal to $(-t^{\frac{-1}{2}}-t^{\frac{1}{2}})$. Define $V_{g}^{\mathbb{A}}(t):\vec{T} \to \mathbb{C}$ analogously to \emph{\cite{aielloconti}}, that is,

\[V_{g}^{\mathbb{A}}(t):=V_{\mathcal{L}_{\mathbb{A}}(R,S;k)}^{\mathbb{A}}(t)(-t^{\frac{-1}{2}}-t^{\frac{1}{2}})^{-n+1}.\] Then, for $t \in \{1,i,e^{\pm \frac{\pi i}{3}}\}$, $V_{g}^{\mathbb{A}}(t)$ is a function of positive type on $\vec{T}$, and consequently the Jones polynomial of $\mathcal{L}_{\mathbb{A}}(g)$, evaluated at $t\in \{1,i,e^{\pm \frac{\pi i}{3}}\}$, arises as the coefficient of a unitary representation of $\vec{T}$.
\end{cor}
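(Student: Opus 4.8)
The plan is to reduce Corollary~\ref{thm: jones rep} to the already-established machinery of Aiello and Conti \cite{aielloconti} by showing that the annular construction $\mathcal{L}_{\mathbb{A}}$ factors through their setup in a way that is compatible with functions of positive type. Recall that a function $\varphi$ on a group $G$ is of positive type precisely when the matrix $\big(\varphi(g_i^{-1}g_j)\big)_{i,j}$ is positive semidefinite for every finite collection $g_1,\dots,g_m \in G$; equivalently, $\varphi$ arises as a diagonal coefficient $\langle \pi(g)\xi,\xi\rangle$ of a unitary representation $\pi$. So the target is to exhibit, for each of the four special values $t \in \{1,i,e^{\pm \pi i/3}\}$, a unitary representation of $\vec T$ whose associated coefficient is exactly the normalized invariant $V_g^{\mathbb A}(t)$.

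First I would make precise the dictionary between the annular picture and the spherical picture underlying \cite{aielloconti}. The key structural input is the construction of Section~\ref{sec: construction}: an element $g=(R,S;k)\in \vec T$ is encoded by a pair of trees together with the rotation parameter $k$, and the oriented subgroup $\vec T$ is designed so that $\mathcal{L}_{\mathbb{A}}(g)$ carries a well-defined orientation making its skein-theoretic evaluation behave like that of the Jones polynomial in the Kauffman bracket skein module of the annulus. I would first record the skein relation and the value assigned to an essential (once-wrapping) unknot, namely $(-t^{-1/2}-t^{1/2})$, and verify that with this normalization the quantity $V^{\mathbb A}_{\mathcal L}(t)$ is multiplicative under the planar-algebra operations that Jones' construction translates into group multiplication on $\vec T$. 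This multiplicativity, together with the normalization factor $(-t^{-1/2}-t^{1/2})^{-n+1}$ chosen so that the trivial tree gives value $1$, is what allows $V_g^{\mathbb A}(t)$ to be interpreted as a coefficient function rather than merely an invariant.

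The central step is then to invoke \cite[Theorems~6.2 and 7.4]{aielloconti} to conclude positive-typeness at the four distinguished values. Aiello and Conti prove that at $t\in\{1,i,e^{\pm\pi i/3}\}$ the corresponding evaluations of the (normalized) Jones polynomial define functions of positive type on $\vec T$ for links in $S^3$; these are exactly the values where the Temperley--Lieb--Jones category admits a unitary (or degenerate) structure making the Markov-type trace positive. The work here is to confirm that their positivity argument is insensitive to the passage from $S^3$ to $\mathbb A\times I$: concretely, I would argue that the inner-product structure they build on the colored tree space depends only on local skein data and on the trace evaluation of closed diagrams, both of which are unchanged when the ambient ambient closure is taken in the annulus rather than the sphere. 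If this holds, the Gram matrices $\big(V^{\mathbb A}_{g_i^{-1}g_j}(t)\big)_{i,j}$ inherit positive semidefiniteness directly from the spherical case, and the GNS construction then produces the desired unitary representation of $\vec T$ with $V_g^{\mathbb A}(t)$ as its distinguished coefficient.

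I expect the main obstacle to be precisely the verification that the annular closure does not destroy positivity. In the sphere, the closure pairs a diagram with its mirror and collapses to a scalar via a nondegenerate (at the special values) Markov trace; in the annulus the closure instead lands in the skein module of the annulus, which is a free module on the powers of the core curve rather than a one-dimensional space, so the naive pairing produces a module-valued rather than scalar-valued object. The resolution I anticipate is that the orientation constraint defining $\vec T$ forces the essential components to appear in a controlled way, and that evaluating each core-wrapping component at the prescribed value $(-t^{-1/2}-t^{1/2})$ collapses the annular skein module back to $\mathbb C$ compatibly with the pairing; one must check that this evaluation is the image of a genuine trace on the relevant category so that positivity is preserved. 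Making this collapse compatible with the group structure on $\vec T$ — i.e., showing it is a $*$-homomorphism to $\mathbb C$ at each of the four values — is the technical heart, and it is here that the specific arithmetic of $t\in\{1,i,e^{\pm\pi i/3}\}$ (the roots of unity at which the Jones--Wenzl projectors behave well) must be used rather than treated as a black box.
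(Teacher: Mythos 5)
Your overall strategy (reduce to \cite[Theorems 6.2 and 7.4]{aielloconti}) is the right one, but your proposal is missing the single observation that makes the reduction immediate, and in its place you leave the decisive step as an unresolved ``technical heart.'' The paper's proof rests on the fact that, by construction, $\Gamma(g)$ is the image of $\Gamma_{\mathbb{A}}(g)$ under the inclusion $\mathbb{A} \hookrightarrow \mathbb{R}^{2}$, so the annular diagram $\mathcal{L}_{\mathbb{A}}(g)$, viewed in the plane, \emph{is} Jones' planar diagram $\mathcal{L}(g)$. This yields Propositions \ref{prop: bracket} and \ref{prop: jones poly}: evaluating the annular Kauffman bracket at $x = (-t^{-1/2}-t^{1/2})$ (equivalently, counting essential circles the same as trivial ones) returns exactly the Kauffman bracket, and hence the Jones polynomial, of $\mathcal{L}(g)$. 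Consequently $V_{g}^{\mathbb{A}}(t)$ is not merely ``analogous to'' the Aiello--Conti function --- it is \emph{equal} to it as a function on $\vec{T}$, and positive-typeness at $t \in \{1,i,e^{\pm \pi i/3}\}$ is inherited with no further argument. Your proposal never establishes this identity; you assert that ``the Gram matrices inherit positive semidefiniteness directly from the spherical case,'' but that inheritance is only valid once one knows the annular function coincides with the spherical one, which is precisely the unproven step.

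Relatedly, the obstacle you flag --- that the annular skein module $\mathcal{S}(\mathbb{A}) \cong \mathbb{C}[x]$ is not one-dimensional, so one must check that evaluating the core curve defines a trace compatible with the pairing and re-verify Aiello--Conti's positivity in the annular category --- is a genuine concern only under your framing, and resolving it your way would amount to redoing their work inside $\mathbb{A} \times I$. In the paper's framing it evaporates: the evaluation $\mathbb{C}[x] \to \mathbb{C}$, $x \mapsto (-t^{-1/2}-t^{1/2})$, is exactly the map on skein modules induced by the inclusion $\mathbb{A} \hookrightarrow \mathbb{R}^{2}$, so compatibility with the diagrammatic evaluation is tautological rather than something requiring a $*$-homomorphism or Jones--Wenzl argument. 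Your multiplicativity discussion is likewise not needed: the coefficient interpretation comes from the GNS construction applied to the positive-type function, which Aiello--Conti already supply. The fix is to prove the diagram-level identity first (your ``dictionary''), at which point everything else in your outline collapses to a citation.
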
 

\begin{figure}[t]
\begin{center}
\includegraphics[scale=0.2]{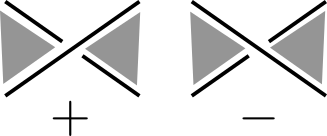}
\end{center}
\begin{center}
\includegraphics[scale=0.3]{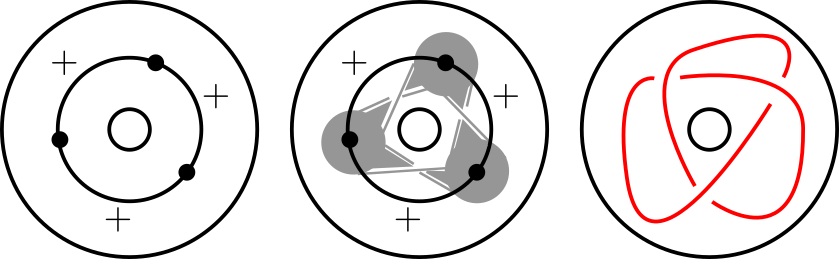}\put(-167,-15){$\Gamma$}\put(-45,-15){$L_{\mathbb{A}}(\Gamma)$}
\caption{An annular link built from an edge-signed graph embedded in $\mathbb{A}$.}\label{fig: tait graph ex}
\end{center}
\end{figure}

Annular links arise naturally in the study of knot theory, categorification \cite{PSA}, and representation theory of planar algebras\cite{graham1998representation,Jones01,JR,ghosh2011planar}. The interplay between the Thompson group and link theory is an emerging subject, and its full interaction with categorification, planar algebras, and representation theory is still being developed. Annular links will likely play an important role in this theory.

The paper proceeds as follows. Section \ref{sec: FT intro} provides an overview of Thompson's groups $F$ and $T$, and outlines Jones' construction of links in $S^{3}$ from $F$. Section \ref{sec: construction} introduces the construction of annular links from $T$ and connects it to Jones' unitary representations. In this section, the concept of \textit{Annular Thompson Badness} is presented as an extension of Jones' concept of Thompson Badness, which he uses to prove that $F$ can produce all link types. Section \ref{sec: proof} uses Annular Thompson Badness to prove Theorem \ref{thm: main}.

\section*{Acknowledgements}
The author is thankful for the guidance provided by her advisor Vyacheslav Krushkal.

\section{Thompson's Groups $F$ and $T$} \label{sec: FT intro}
\subsection{Thompson's Group $F$}\label{subsec: FT intro 1}

The Thompson group $F$ consists of piecewise linear orientation-preserving homeomorphisms of the unit interval $[0,1]$ such that all derivatives are powers of $2$ and all points of non-differentiability occur at dyadic numbers, that is, numbers of the form $\frac{a}{2^{b}}$ for $a, b \in \Z$. For example:
\[g(t)=\begin{cases} \frac{1}{2}t & 0 \leq t \leq \frac{1}{2}\\ t-\frac{1}{4} & \frac{1}{2} \leq t \leq \frac{3}{4} \\ 2t -1 & \frac{3}{4} \leq t \leq 1.\end{cases}\]

A \textit{standard dyadic partition} is a partition of the unit interval such that all intervals are of the form $[\frac{a}{2^{b}},\frac{a+1}{2^{b}}]$. Any ordered pair of standard dyadic partitions with the same number of parts determines an element of $F$, given by the function sending the first partition to the second. For example, the function $g$ above is given by the ordered pair \[\{[0,\frac{1}{2}],[\frac{1}{2}, \frac{3}{4}],[\frac{3}{4},1]\},\{[0,\frac{1}{4}],[\frac{1}{4}, \frac{1}{2}],[\frac{1}{2},1]\}).\] Standard dyadic partitions can be represented as planar, rooted, binary trees, where each leaf represents an interval of the partition.  Therefore, a pair of such trees also determines an element of $F$. This pair of trees is often represented by taking the vertical reflection of $S$ and attaching it to $R$ along their leaves; see Figure \ref{fig:reduced_g}. 

\begin{figure}[H]
\includegraphics[scale=0.35]{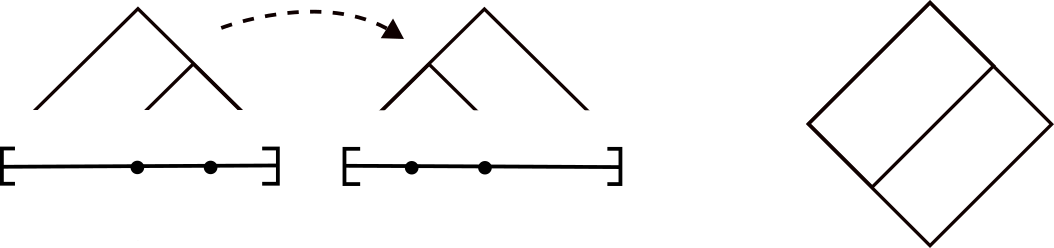}\put(-275,50){$R$}\put(-125,50){$S$}\put(-205,75){$g$}\put(-245,0){$\frac{1}{2}$}\put(-225,0){$\frac{3}{4}$}\put(-173,0){$\frac{1}{4}$}\put(-153,0){$\frac{1}{2}$}
\caption{A pair of standard dyadic partitions, their corresponding trees $R$ and $S$, and their associated element $g \in F$.}
\label{fig:reduced_g}
\end{figure}

Conversely, for every $g \in F$ there is a standard dyadic partition $J$ such that $g(J)$ is standard dyadic. The pair $(J,g(J))$ therefore determines $g$, but this pair is not unique. For any refinement $J'$ of $J$ which also standard dyadic, $(J',g(J'))$ also represents $g$. In terms of trees, refining a pair of partitions corresponds to adding finitely many 
\textit{cancelling carets} to their pair of trees, as shown in Figure \ref{carets}.

\begin{figure}[H]
\includegraphics[scale=0.35
]{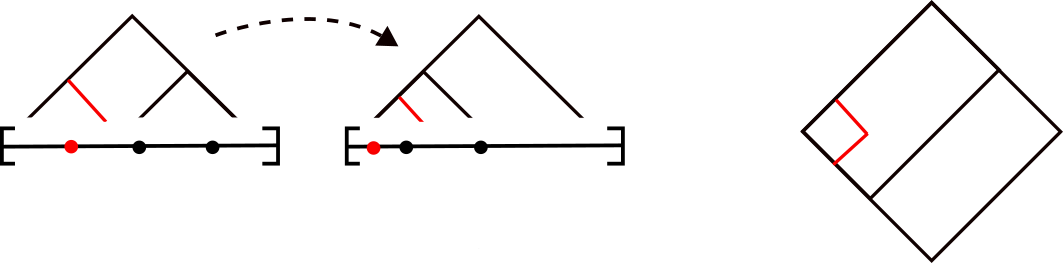}\put(-246,7){$\frac{1}{2}$}\put(-226,7){$\frac{3}{4}$}\put(-175,7){$\frac{1}{4}$}\put(-155,7){$\frac{1}{2}$}\put(-280,50){$R'$}\put(-127,50){$S'$}\put(-205,75){$g$}\put(-264,7){$\frac{1}{4}$}\put(-185,7){$\frac{1}{8}$}
\caption{A pair-of-trees representation of the same element $g$ from Figure \ref{fig:reduced_g}, which differs from the pair in Figure \ref{fig:reduced_g} by a cancelling caret.}
\label{carets}
\end{figure}
In fact, any two pairs of trees representing the same element of $F$ must differ by the addition or deletion of finitely many cancelling carets, and a pair of trees is called \textit{reduced} if no carets can be cancelled. Reduced pairs of planar, rooted, binary trees are therefore in bijection with elements of $F$; more details of this correspondence can be found in \cite{belk}. From now on, an ordered pair $(R,S)$ will refer to both a pair of standard dyadic partitions and its associated pair of trees, and elements of $F$ will be specified by these pairs.

Pairs of trees corresponding to elements of $F$ are part of a broader class of graphs called \textit{strand diagrams}, introduced by Belk in \cite{belk}. A general strand diagram can be \textit{reduced} according to moves of Type I and II, which were independently found by \cite{belk, gubasapir}. These moves are useful for visualizing the group operation in $F$. To compose $g$ with $f$, place the pair of trees for $g$ below that of $f$ as in Figure \ref{fig: comp in F} and then reduce the resulting strand diagram. This leads to the unique  reduced pair-of-trees diagram representing $g \circ f$. 
\begin{figure}[H]
\begin{center}
\includegraphics[scale=0.25] {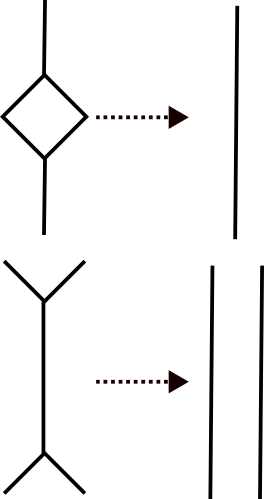}\put(-100,85){Type  I}\put(-100,25){Type II} \hspace{1in}
\includegraphics[scale=0.3]{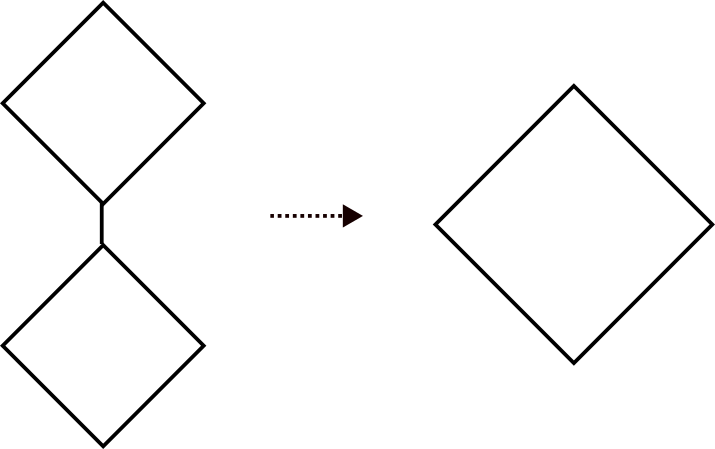}\put(-142,75){$f$}\put(-142,20){$g$}\put(-45,50){$g \circ f$}\put(-107,60){and II}\put(-107,70){Types I}
\caption{Diagrammatic composition in $F$ as given by \cite{belk,gubasapir}.}\label{fig: comp in F}\end{center}
\end{figure}

\subsection{Link Diagrams in the plane from $F$} Although Jones' original construction of links was formulated in terms of unitary representations of $F$, Jones provided two equivalent diagrammatic methods for building links\cite{jones14,jones18}. 

The first, pictured in Figure \ref{fig: hopf}, turns a reduced pair of trees $(R,S)$ into a reduced pair of ternary trees $(\phi(R),\phi(S))$, connects the two roots and the leaves from left to right, and then changes $4$-valent vertices to crossings.

\begin{figure}[H]
\begin{center}
    \includegraphics[scale=0.4]{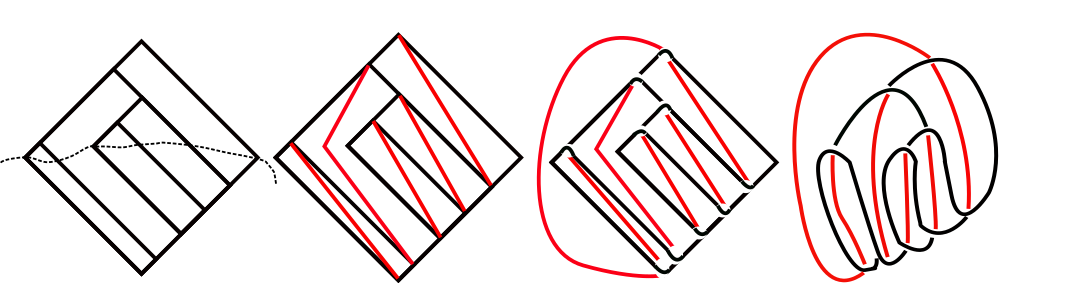}\put(-320,60){$R$}\put(-252,60){$\phi(R)$}\put(-252,10){$\phi(S)$}\put(-320,10){$S$}\put(-340,30){$g$}\put(-25,30){$\mathcal{L}(g)$}\caption{A Hopf link created from an element of $F$ via the construction introduced by Jones \cite{jones14}.}\label{fig: hopf}
\end{center}
\end{figure}

The second method, pictured in Figure \ref{fig: gamma RS}, builds the Tait graph $\Gamma(g)$ of $\mathcal{L}(g)$. For $g=(R,S)$, one makes two graphs $\Gamma(R)$ and $\Gamma(S)$ which have the same number of edges. Specifically, $\Gamma(R)$ has one vertex for each leaf of $R$, and it is placed to immediately to the left of the leaf. The vertices for $\Gamma(S)$ are created in the same way from $S$. For every edge $e$ in $R$ (resp. $S$) that slopes up and to the right, $\Gamma(R)$ (resp. $\Gamma(S)$) will have one edge which trasverlsely intesects $e$ once and no other edges. $\Gamma(g)$ is then built by reflecting $\Gamma(S)$ over the $x$-axis and identifying its edges with those of $\Gamma(R)$. Edges of $\Gamma(g)$ originating from $\Gamma(R)$ are given a positive sign and edges originating from $\Gamma(S)$ are given a negative sign.
\begin{center}
\begin{figure}[H]
\includegraphics[scale=0.6]{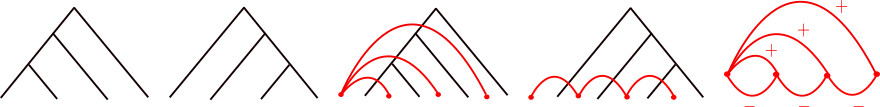}\put(-365,-13){$R$}\put(-45,-13){\color{red}$\Gamma(g)$}\put(-125,-13){\color{red}$\Gamma(S)$}\put(-208,-13){\color{red}$\Gamma(R)$}\put(-290,-13){$S$}
\caption{$\Gamma(g)$, the Tait graph for $\mathcal{L}(g)$, where $g$ is specified by $(R,S)$.}\label{fig: gamma RS}
\end{figure}
\end{center} 
\subsection{Thompson Badness}
To detect whether a general edge-signed planar graph $\Gamma$ is equal to $\Gamma(g)$ for some $g \in F$, Jones introduced \textit{Thompson Badness}, a quantity which is zero exactly when $\Gamma=\Gamma(g)$. To calculate Thompson Badness, first embed $\Gamma \hookrightarrow \R^{2}$ such that all vertices are on the $x$ axis, the leftmost vertex is at the origin, and for each edge, its interior is either entirely above or entirely below the $x$ axis. Consider each edge to be oriented from left to right, so that its rightmost vertex is considered the \textit{terminal vertex.} The formula for Thompson Badness, which will be given momentarily, depends on the cardinality of the following sets:
\begin{align*}
e^{in}_{v}&:=\{e \in e(\Gamma):  v\text{ is the terminal vertex of }e\}\\
e^{up}&:=\{e \in e(\Gamma): \text{int}(e)\text{ is in the upper half-plane}\}\\
e^{down}&:=\{e \in e(\Gamma): \text{int}(e)\text{ is in the lower half-plane}\}\\
e^{up}_{-}&:=\{e \in e(\Gamma): \text{int}(e)\text{ is in the upper half-plane and $e$ has sign $-$}\}\\
e^{down}_{+}&:=\{e \in e(\Gamma): \text{int}(e)\text{ is in the lower half-plane and $e$ has sign $+$}\}.\\
\end{align*}
Jones defines Thompson Badness as \[TB(\Gamma)=\sum_{v\in V(\Gamma)\setminus \{(0,0)\}}(|1-|e^{in}_{v} \cap e^{up}||+|1-|e^{in}_{v}\cap e^{down}||)+|e^{up}_{-}|+|e^{down}_{+}|\] and shows that $TB(\Gamma)=0$ if and only if $\Gamma = \Gamma(g)$ for some $g \in F$ \cite[Sections $4$ and $5$]{jones14}. 
\subsection{The oriented subgroup $\vec{F}$} Jones defined $\vec{F}$ as the set of elements $g \in F$ whose link diagram $\mathcal{L}(g)$, when given the checkerboard shading, results in an orientable surface, i.e. a Seifert surface for $\mathcal{L}(g)$.
Equivalently, this can be expressed in terms of the chromatic polynomial $Chr_{\Gamma(g)}(Q)$: \[\vec{F}=\{g \in F | Chr_{\Gamma(g)}(2)=2\}.\]

If one follows the convention that the leftmost face of the checkerboard surface is always positively oriented, each $g \in \vec{F}$ builds a link $L(g)$ with a natural orientation, namely that induced by the orientation of the checkerboard surface as in Figure \ref{fig: checkerboard}.
\begin{figure}[h]
\includegraphics[scale=0.35]{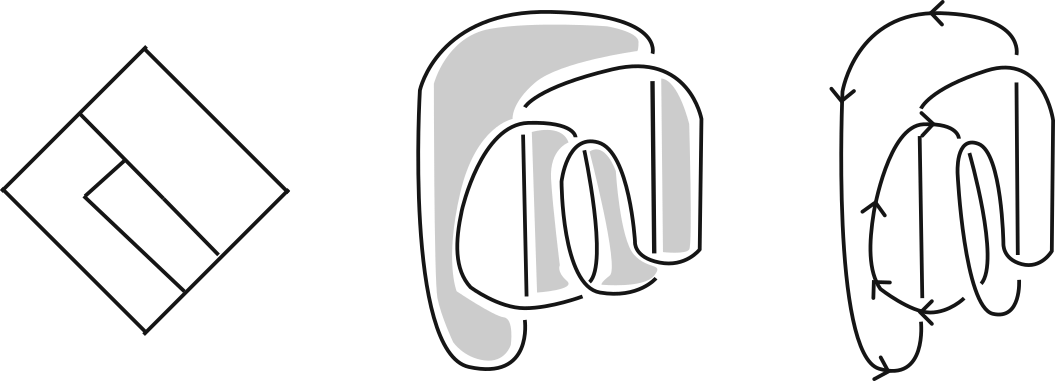}\put(-254,-10){$g \in \vec{F}$}\put(-35,-10){$\vec{\mathcal{L}}(g)$}
{\small 
\put(-166,40){$+$}
\put(-139,40){$-$}
\put(-120,40){$+$}
\put(-104,40){$-$}
}
\caption{An oriented link $\vec{\mathcal{L}}(g)$ built from $g \in \vec{F}.$} \label{fig: checkerboard}
\end{figure}
\subsection{Thompson's Group $T$}

$T$ is the group of piecewise-linear orientation-preserving self-homeomorphisms of $S^{1}$, thought of as the unit interval with its endpoints identified, such that derivatives are powers of $2$ and all points of non-differentiability occur at dyadic numbers. $F$ is the subgroup of $T$ whose elements send $1 \mapsto 1$. Elements of $T$ are given by triples $(R,S;k)$ where $(R,S)$ is a pair of planar, rooted, binary trees and $k$ is a positive integer between $1$ and the number of leaves of $R$ and $S$. The integer $k$ indicates that the first part of $R$ is sent to the $k$th part of $S$, and this triple $(R,S;k)$ determines an element of $T$. Observe that $k=1$ if and only if $g=(R,S;k) \in F$. To indicate the value of $k$ in a pair-of-trees diagram, a decoration is placed on the $k$th leaf of $S$, as in Figure \ref{fig: T trees}. 

\begin{figure}[H]
    \includegraphics[scale=0.35]{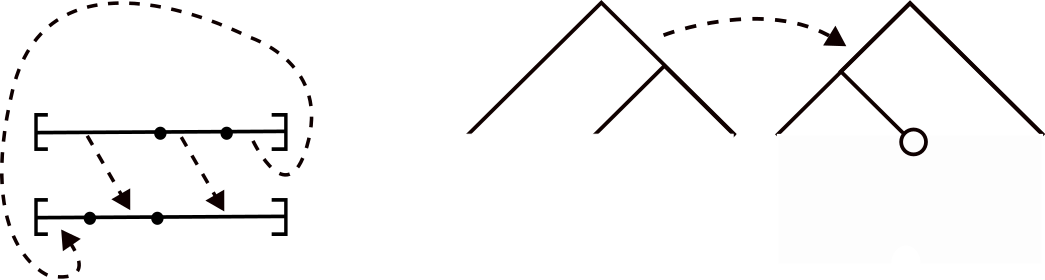}\put(-80,79){$g$}\put(-237,50){$\frac{1}{2}$}\put(-219,50){$\frac{3}{4}$}\put(-254,0){$\frac{1}{4}$}\put(-237,0){$\frac{1}{2}$}
    \caption{The reduced pair of trees and decorated leaf representing the element $g \in T$ which maps $[0,\frac{1}{2}]\mapsto [\frac{1}{4},\frac{1}{2}]$, $[\frac{1}{2},\frac{3}{4}]\mapsto [\frac{1}{2},1]$, and $[\frac{3}{4},1]\mapsto [0,\frac{1}{4}]$. }\label{fig: T trees}
\end{figure}

As was the case for $F$, one can refine the partitions $R$ and $S$ to produce an unreduced triple $(R',S';k')$ which differs from $(R,S;k)$ by cancelling carets; see Figure \ref{fig: T trees with caret}. Cancelling carets are slightly less obvious for diagrams in $T \setminus F$ due to the fact that interval corresponding the first leaf of $R$ is not mapped to the interval corresponding to the first leaf of $S$.

\begin{figure}[H]
    \includegraphics[scale=0.35]{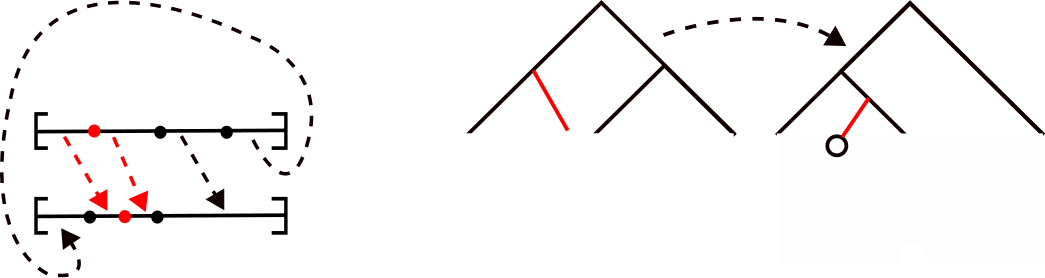}\put(-80,79){$g$}\put(-237,50){$\frac{1}{2}$}\put(-219,50){$\frac{3}{4}$}\put(-254,0){$\frac{1}{4}$}\put(-237,0){$\frac{1}{2}$}\put(-254,50){$\frac{1}{4}$}\put(-245,0){$\frac{3}{8}$}
    \caption{An unreduced triple representing the same element $g$ as in Figure \ref{fig: T trees}, which differs from the triple in Figure \ref{fig: T trees} by a cancelling caret, shown in red.}\label{fig: T trees with caret}
\end{figure}

Section \ref{subsec: FT intro 1} introduced strand diagrams as a way to visualize the group operation in $F$. An analogue for $T$ was developed Belk and Matucci \cite{belk_matucci}. Specifically, every element of $T$ corresponds to a unique reduced \textit{cylindrical strand diagram}, which satisfies the same conditions as a strand diagram, but is now embedded in $S^{1} \times [0,1]$ rather than the unit square\cite{belk_matucci}. Following the definition in \cite{belk_matucci}, isotopic cylindrical strand diagrams are considered equal, and isotopies are not required to fix the boundary circles. Therefore, cylindrical strand diagrams differing by Dehn twists are considered equal.

To associate a cylindrical strand diagram to an element $g=(R,S;k) \in T$, place the trees $R$ and $S$ in the cylinder as in Figure \ref{fig: T strand diag}. Identify leaves such that the first leaf of $R$ is sent to the $k$th leaf of $S$, and then connect the rest of the leaves in unique way for which the graph remains embedded; see Figure \ref{fig: T strand diag}.

\begin{center}
\begin{figure}[H]
\includegraphics[scale=0.55]{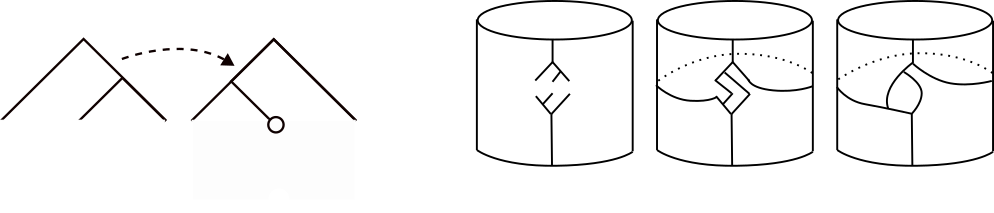}\put(-340,75){$g$}\put(-38,97){$D_{g}$}
\caption{A cylindrical strand diagram $D_{g}$ built from $g \in T$. }\label{fig: T strand diag}
\end{figure}
\end{center}

In Figure \ref{fig: T strand diag}, the rightmost picture differs from the picture to its left by the smoothing of edges. For the rest of this paper strand diagrams built from $T$ will appear without smoothed edges, to indicate the pair of trees from which the diagram was created. 

Cylindrical strand diagrams may be reduced according to local moves of Type I and II as in Figure \ref{fig: comp in F}. As was the case for $F$, given cylindrical strand diagrams for $f, g \in T$, vertically stacking the cylinders and reducing using moves of Type I and II results in the reduced cylindrical strand diagram for $g \circ f$\cite{belk_matucci}. Just as strand diagrams are used to build links from $F$, this paper will use cylindrical strand diagrams to construct annular links from $T$. A forthcoming paper uses strand diagrams to relate Thompson's group F to Khovanov homology of links in 3-space \cite{KLL, khovanov}; it may be possible to use cylindrical strand diagrams to give an analogue for the group $T$ and annular link homologies.

\section{Building Annular Links from Thompson's Group $T$}\label{sec: construction}

To construct annular links from $T$, this section introduces two equivalent methods analogous to those introduced by Jones for $F$.

The first method is pictured in Figure \ref{fig: strand diag construction}. Given $g=(R,S;k)$, consider the associated strand diagram $D_{g}$. Following the method for building links from $F$, add edges to $R$ and $S$ to make them ternary trees, and consider these new edges numbered from left to right. The edge above each root is considered to be numbered $0$. Next, stack ``empty" cylinders above and below $D_{g}$ and connect numbered edges with non-crossing arcs according to the following rule: when the $n$th edge of $R$ is connected to the $m$th edge of $S$ and $n > m$, the arc connecting them must wrap around the annulus. Otherwise, the arc does not wrap around the annulus. Note that this rule guarantees that the arc connecting the top root to another edge will never wrap around the cylinder, and the arc connecting the bottom root to another edge will always wrap around the cylinder, unless a single arc connects the two roots (in which case $g \in F$). Finally, all $4$-valent vertices become crossings as before.

\begin{center}
    \begin{figure}[H]\includegraphics[scale=0.4]{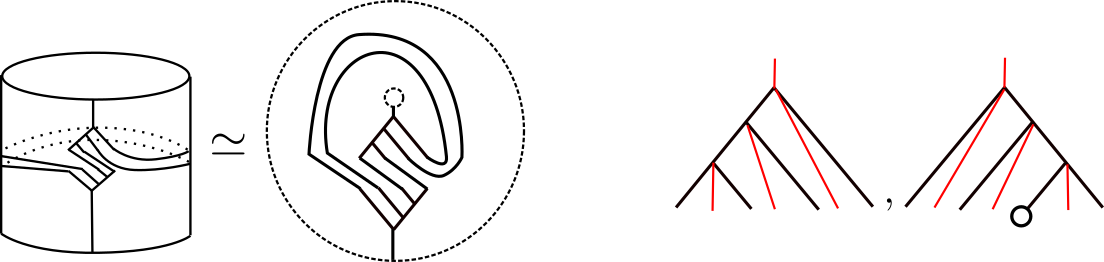}\put(-353,25){$D_{g}$}\put(-103,65){$0$}\put(-33,65){$0$}\put(-120,3){$1$}\put(-102,3){$2$}\put(-80,3){$3$}\put(-55,3){$1$}\put(-36,3){$2$}\put(-12,3){$3$}\\ \includegraphics[scale=0.4]{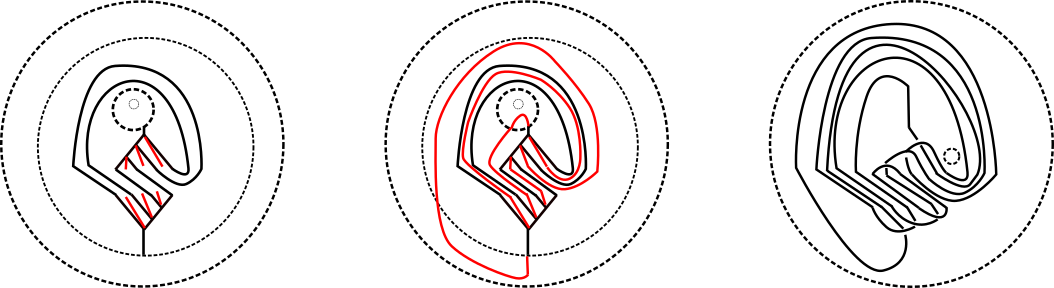}\put(-10,0){$\mathcal{L}_{\mathbb{A}}(g)$}\caption{Building the annular link $\mathcal{L}_{\mathbb{A}}(g)$ from $g \in T$ via the strand diagram $D_{g}$.}\label{fig: strand diag construction}
    \end{figure}
\end{center}

The second method for building annular links from $T$, pictured in Figure \ref{fig: annular tait graph}, involves building an edge-signed graph $\Gamma_{\mathbb{A}}(g) \hookrightarrow \mathbb{A}$ and defining $\mathcal{L}_{\mathbb{A}}(g):=L_{\mathbb{A}}(\Gamma_{\mathbb{A}}(g))$. $\Gamma_{\mathbb{A}}(g)$ is built from $\Gamma(R)$ and $\Gamma(S)$, which are created as in Section \ref{sec: FT intro}. However, the first vertex of 
$\Gamma(R)$ is now identified with the $k$th vertex of $\Gamma(S)$, and edges of $R$ attaching to edges to their left in $S$ must wrap counterclockwise around $\mathbb{A}$; see Figure \ref{fig: annular tait graph}. This second construction is used in Section \ref{sec: proof} to prove Theorem \ref{thm: main}.
\begin{center}
\begin{figure}[h]
\includegraphics[scale=0.6]{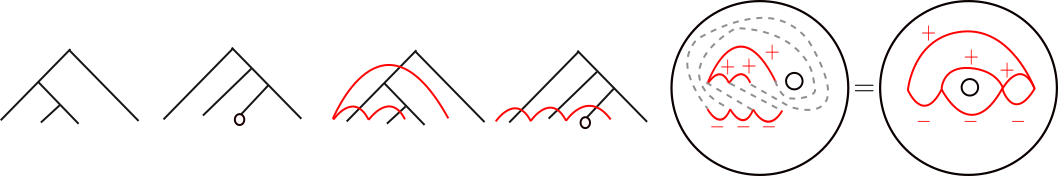}\put(-450,0){$R$}\put(-225,0){$\Gamma(S)$}\put(-300,0){$\Gamma(R)$}\put(-380,0){$S,k$}\put(-70,-15){$\Gamma_{\mathbb{A}}(g) \hookrightarrow \mathbb{A}$}
\caption{Building the graph $\Gamma_{\mathbb{A}}(g)$ from $g=(R,S;k)$. Dotted lines denote identification of vertices; they are not edges.}
\label{fig: annular tait graph}
\end{figure}
\end{center}

Annular links created from $T$ are closely related to Jones' \textit{planar} links built from $T$ see\cite[Section $4.2$]{jones14}). By construction, $\Gamma(g)$ is the image of $\Gamma_{\mathbb{A}}(g)$ under the inclusion $\mathbb{A} \hookrightarrow \mathbb{R}^{2}$. Consequently, the diagram $\mathcal{L}(g)$ is the image of the diagram $\mathcal{L}_{\mathbb{A}}(g)$ under the same inclusion.
From this we can relate the Kauffman bracket of $\mathcal{L}_{\mathbb{A}}(g)$ to that of $\mathcal{L}(g)$:
\begin{prop}\label{prop: bracket} Let $g \in T$ be given by $(R,S;k)$. Consider $\mathcal{L}_{\mathbb{A}}(g)$ as an element of $\C[x]$, the Skein module $\mathcal{S}(\mathbb{A})$. Evaluating at $x=(-t^{\frac{-1}{2}}-t^{\frac{1}{2}})$ returns the Kauffman Bracket of $\mathcal{L}(g) \in S^{3}$. 
\end{prop}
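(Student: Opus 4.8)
The plan is to use the naturality of the Kauffman bracket skein module under the standard inclusion $\iota\colon \mathbb{A}\times I \hookrightarrow S^3$ induced by $\mathbb{A}\hookrightarrow \mathbb{R}^2\subset S^3$, together with the observation recorded just before the statement that the planar diagram $\mathcal{L}(g)$ is precisely the image $\iota(\mathcal{L}_{\mathbb{A}}(g))$ of the annular diagram. Because $\mathcal{S}(S^3)$ is free of rank one on the empty diagram, with the normalization $\langle\emptyset\rangle=1$, the bracket $\langle L\rangle$ is exactly the scalar expressing $L$ in that basis; hence $\iota$ induces a linear map $\iota_*\colon \mathcal{S}(\mathbb{A})\to \mathcal{S}(S^3)\cong\C$ sending the class of any annular diagram $D$ to $\langle \iota(D)\rangle$. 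The entire argument then reduces to computing $\iota_*$ on the generator $x$ of $\mathcal{S}(\mathbb{A})=\C[x]$.

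First I would invoke the standard identification $\mathcal{S}(\mathbb{A})\cong\C[x]$, in which $x$ is the class of the core curve of the annulus and $x^n$ is the class of $n$ disjoint parallel copies of the core. Next I would evaluate $\iota_*$ on this basis: under $\iota$, the $n$ parallel cores become $n$ concentric circles in $\mathbb{R}^2$, i.e. an $n$-component unlink, each of whose components bounds a disk and is therefore a trivial unknot in $S^3$. With the loop value $d:=(-t^{-1/2}-t^{1/2})=-A^2-A^{-2}$ (taking $A=t^{-1/4}$), the bracket of the $n$-unlink is $d^n$, so $\iota_*(x^n)=d^n$. By linearity, $\iota_*$ is exactly the evaluation homomorphism $x\mapsto d$, which is the substitution appearing in the statement.

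It then remains only to assemble the pieces. Resolving the finitely many crossings of the diagram $\mathcal{L}_{\mathbb{A}}(g)$ by the Kauffman relation and reducing each resulting system of simple closed curves to the basis $\{x^n\}$ — each contractible loop contributing a factor $d$ and each family of essential curves the corresponding power of $x$ — expresses $\mathcal{L}_{\mathbb{A}}(g)=\sum_n c_n\,x^n\in\C[x]$. Naturality of $\iota_*$ with respect to this crossing-resolution then gives
\[
\langle \mathcal{L}(g)\rangle=\langle \iota(\mathcal{L}_{\mathbb{A}}(g))\rangle=\iota_*\big(\mathcal{L}_{\mathbb{A}}(g)\big)=\sum_n c_n\,d^{\,n}=\mathcal{L}_{\mathbb{A}}(g)\big|_{x=d},
\]
which is exactly the assertion once $d$ is written as $(-t^{-1/2}-t^{1/2})$.

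The content here is modest, so the step I would treat most carefully — rather than as a genuine obstacle — is the bookkeeping of conventions and well-definedness: one must confirm that the embedding $\mathbb{A}\times I\hookrightarrow S^3$ is the unknotted one, so that essential annular curves really do map to unknots; that $\iota_*$ is well defined on skein modules, which is automatic since $\iota$ is an embedding and the Kauffman relations are local and hence preserved; and that the $S^3$ normalization $\langle\emptyset\rangle=1$ is matched by taking the empty diagram to be the unit of $\C[x]$ and a single contractible loop to equal $d$. Once these normalizations are aligned, the evaluation $x\mapsto d$ and the bracket agree on the nose.
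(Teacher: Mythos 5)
Your proposal is correct and is essentially the paper's argument: the paper offers no separate proof, treating the proposition as an immediate consequence of the observation (stated just before it) that the diagram $\mathcal{L}(g)$ is the image of $\mathcal{L}_{\mathbb{A}}(g)$ under the inclusion $\mathbb{A} \hookrightarrow \mathbb{R}^{2}$, which is exactly the naturality-of-skein-modules argument you spell out. Your computation that the induced map $\mathcal{S}(\mathbb{A}) \to \mathcal{S}(S^{3}) \cong \C$ sends $x^{n} \mapsto (-t^{\frac{-1}{2}}-t^{\frac{1}{2}})^{n}$, together with the normalization bookkeeping, simply makes explicit the details the paper leaves implicit.
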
 To discuss the analogous result for the Jones polynomial, we must first discuss the oriented subgroup $\vec{T}$, introduced by Jones \cite{jones14}. Defined analogously to $\vec{F}$, \[\vec{T}:=\{g \in T | Chr_{\Gamma(g)}(2)=2\}.\] It follows that for $g \in \vec{T}$, 
$\mathcal{L}_{\mathbb{A}}(g)$ has a natural orientation. The following proposition relates the Jones polynomial of $\mathcal{L}_(g)$ to that of $\mathcal{L}_{\mathbb{A}}(g)$. 
\begin{prop}\label{prop: jones poly} Let $ g \in \vec{T}$. Setting the value of a circle wrapping once around $\mathbb{A}$ equal to that of a trivial circle, the Jones polynomial of $\mathcal{L}_{\mathbb{A}}(g)$ is equal to that of $\mathcal{L}(g)$.
\end{prop}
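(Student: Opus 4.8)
The plan is to reduce the statement to the identity of Kauffman brackets already recorded in Proposition \ref{prop: bracket}, by checking separately that the writhe correction and the induced orientation agree on the two sides. Recall that for an oriented diagram $D$ the Jones polynomial is the writhe-normalized bracket,
\[
V(t) = \left((-A^3)^{-w(D)}\,\langle D\rangle\right)\Big|_{A=t^{-1/4}},
\]
where $\langle\,\cdot\,\rangle$ denotes the Kauffman bracket normalized so that a single trivial circle takes the value $-A^2-A^{-2}$ and $w(D)$ is the writhe. Since two oriented diagrams with the same bracket and the same writhe have the same Jones polynomial, it suffices to compare these two quantities for $\mathcal{L}_{\mathbb{A}}(g)$ and $\mathcal{L}(g)$.

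For the bracket, I would observe that ``setting the value of a circle wrapping once around $\mathbb{A}$ equal to that of a trivial circle'' is precisely the evaluation of the skein-module class of $\mathcal{L}_{\mathbb{A}}(g)\in\C[x]$ at $x=-A^2-A^{-2}$, i.e.\ at $x=-t^{1/2}-t^{-1/2}$ after the substitution $A=t^{-1/4}$. By Proposition \ref{prop: bracket} this evaluation returns the Kauffman bracket of $\mathcal{L}(g)\in S^3$, so the two brackets agree.

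It then remains to show that the writhes agree. By the remark preceding the proposition, the diagram $\mathcal{L}(g)$ is the image of the diagram $\mathcal{L}_{\mathbb{A}}(g)$ under the inclusion $\mathbb{A}\hookrightarrow\R^2$; in particular the two diagrams share exactly the same crossings, and the specialization $x=-A^2-A^{-2}$ affects only the closed curves carrying the variable $x$, not the crossing structure. Since $g\in\vec{T}$, both diagrams inherit the canonical orientation from their checkerboard surfaces (the planar diagram is orientable because $Chr_{\Gamma(g)}(2)=2$ is exactly the defining condition). Because $\Gamma(g)$ is the image of $\Gamma_{\mathbb{A}}(g)$ under the same inclusion, the shaded checkerboard surface of $\mathcal{L}_{\mathbb{A}}(g)$ maps to that of $\mathcal{L}(g)$, so the two induced orientations correspond crossing-by-crossing. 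Hence each crossing carries the same sign in both diagrams and $w(\mathcal{L}_{\mathbb{A}}(g))=w(\mathcal{L}(g))$; combining this with the equality of brackets yields the asserted equality of Jones polynomials.

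The step I expect to require the most care is this orientation comparison: one must confirm that the leftmost-face shading convention transports correctly under $\mathbb{A}\hookrightarrow\R^2$, in particular near the boundary circles and the hole of the annulus, where faces of the annular diagram are absorbed into the unbounded face of the planar diagram, so that the checkerboard surfaces, and therefore the crossing signs, genuinely match. Once this compatibility is verified the equality of writhes is immediate, and the proposition follows at once from Proposition \ref{prop: bracket}.
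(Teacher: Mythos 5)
Your proof is correct and takes essentially the same approach as the paper, which states this proposition without a separate argument, treating it as immediate from the observation (which you also use) that the diagram $\mathcal{L}(g)$ is the image of the diagram $\mathcal{L}_{\mathbb{A}}(g)$ under the inclusion $\mathbb{A}\hookrightarrow\R^{2}$, combined with Proposition \ref{prop: bracket}. Your explicit decomposition into equality of evaluated brackets plus equality of writhes (via the shared crossings and the checkerboard-induced orientation) simply fills in the details the paper leaves implicit.
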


Propositions \ref{prop: bracket} and \ref{prop: jones poly}, together with Aiello and Conti's proofs of \cite[Theorems $6.2$ and $7.4$]{aielloconti}, imply Corollary \ref{thm: jones rep}.

\subsection{Annular Thompson Badness}\label{subsec: ATB}We now establish an annular analogue for Jones' Thompson Badness. In this section and Section \ref{sec: proof}, we think of $\mathbb{A}$ as $\mathbb{D}^{1}\setminus\{(0,0)\}$. 

\begin{dfn}Let $\Gamma \hookrightarrow \mathbb{A}$ be an edge-signed graph. We say $\Gamma$ is \textbf{$ATB$-friendly} if: 
\begin{itemize}
    \item $\Gamma$ has no loops.
    \item all vertices lie on the $x$ axis.
    \item all edges have interiors either entirely above, or entirely below the $x$ axis.
\end{itemize}  
\end{dfn} 

Now suppose a graph $\Gamma \hookrightarrow \mathbb{A}$ is $ATB$-friendly. Define $e^{in}_{v}, e^{up}, e^{down}, e^{up}_{-}, e^{down}_{+}$ as before. Let $v_{L}$ describe the leftmost vertex and let $v_{1}$ describe the vertex immediately to the right of the origin. 
Label the $N$ vertices by $\{1,\ldots, N\}$ such that $v_{1}$ is labelled $1$, the vertex immediately to its right is labelled $2$, and so on, until the rightmost vertex is labelled $k-1$. Then label the leftmost vertex $k$ and continue increasing left to right until the vertex immediately to the left of the origin is labelled $N$. For example, \\
\begin{center}
\includegraphics[scale=0.3]{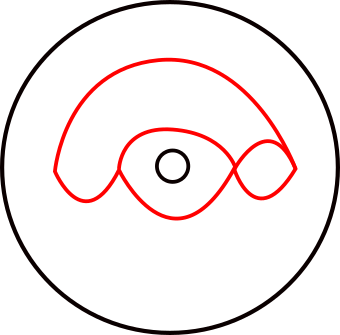}\put(-68,18){$3$}\put(-53,18){$4$}\put(-30,18){$1$}\put(-15,18){$2$}.
\end{center} Let $l(v)$ refer to the label of $v$. By construction $\l(v_{L})=k$.
Define \[e^{<}_{v}:=\{e \in e(\Gamma):  e\text{ connects $v$ to some $w$ such that }l(w) < l(v)\}.\] Now define Annular Thompson Badness, or $ATB$, as follows: \[ATB(\Gamma):=\sum_{v \in V(\Gamma)\setminus v_L}|1-|e^{in}_{v} \cap e^{down}|| + \sum_{v \in V(\Gamma)\setminus v_{1}} |1-|e^{<}_{v}\cap e^{up}||+|e^{up}_{-}|+|e^{down}_{+}|.\]

The following proposition motivates this definition as the correct analogue for Thompson Badness. 
\begin{prop}\label{prop: ATB} Let $\Gamma \hookrightarrow \mathbb{A}$ be an $ATB$-friendly graph. Then $ATB(\Gamma)=0$ if and only if $\Gamma=\Gamma_{\mathbb{A}}(g)$ for some $g \in T$.
\end{prop}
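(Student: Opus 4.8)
The plan is to prove both implications by first translating the vanishing of $ATB(\Gamma)$ into structural conditions on the up- and down-edges, and then matching those conditions against the defining features of the construction $\Gamma_{\mathbb{A}}(\cdot)$. The starting observation is that every summand of $ATB(\Gamma)$ is a nonnegative integer: the terms $|1-|e^{in}_v \cap e^{down}||$ and $|1-|e^<_v \cap e^{up}||$ are absolute values of integers, while $|e^{up}_-|$ and $|e^{down}_+|$ are cardinalities. Hence $ATB(\Gamma)=0$ if and only if all summands vanish simultaneously, which is equivalent to four conditions: (C1) every $v \neq v_L$ is the terminal (rightmost) endpoint of exactly one lower edge; (C2) every $v \neq v_1$ is joined by exactly one upper edge to a vertex of strictly smaller label; (C3) every upper edge is positive; (C4) every lower edge is negative. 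The proposition thus reduces to showing that (C1)--(C4) characterize the graphs $\Gamma_{\mathbb{A}}(g)$.

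For the forward implication I would fix $g=(R,S;k)$ and read off (C1)--(C4) from the construction in Figure~\ref{fig: annular tait graph}. Conditions (C3) and (C4) are immediate, since the upper edges of $\Gamma_{\mathbb{A}}(g)$ are by definition the positively signed edges of $\Gamma(R)$ and the lower edges are the negatively signed edges of $\Gamma(S)$. Conditions (C1) and (C2) are the annular incarnations of the fact, established by Jones in the planar case, that $\Gamma(S)$ and $\Gamma(R)$ are Tait graphs of binary trees, in which every vertex except the root is the terminal endpoint of exactly one edge. Because $\Gamma(S)$ is embedded \emph{without} wrapping, geometric terminal order coincides with the natural order of $S$ and its root is the leftmost vertex $v_L$, giving (C1). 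Because $\Gamma(R)$ is attached with the cyclic shift by $k$, its edges wrap exactly when they would run ``to the left,'' so the order relevant to $R$ is the cyclic \emph{label} order rather than the geometric one; this is precisely why the correct analogue replaces the geometric ``terminal vertex'' used for $S$ by the label-based set $e^<_v$, and the root of $R$ becomes $v_1$, giving (C2).

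For the reverse implication I would reconstruct the triple from a graph satisfying (C1)--(C4). Condition (C1) defines a parent function $p_S$ sending each $v \neq v_L$ to the left endpoint of its unique incoming lower edge; since that endpoint lies strictly to the left, $p_S$ is acyclic and rooted at $v_L$, and Jones' planar correspondence applied to the non-crossing lower edges recovers a binary tree $S$. Condition (C2) defines a parent function $p_R$ sending each $v \neq v_1$ to the smaller-label endpoint of its unique upper edge; acyclicity is now immediate from the strict decrease of labels, and the same correspondence recovers a binary tree $R$, read in label order. Both trees have $N$ leaves, so setting $k := l(v_L)$ yields a valid triple $(R,S;k)$, and it remains to verify $\Gamma_{\mathbb{A}}(R,S;k)=\Gamma$: the two graphs share vertices and signed upper/lower edge sets, and one checks that an upper edge wraps in $\Gamma_{\mathbb{A}}(R,S;k)$ precisely when its endpoints decrease in label, exactly as recorded in $\Gamma$.

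The step I expect to be the main obstacle is this last consistency check: confirming that the two reconstructions, carried out in two different orders (geometric for $S$, cyclic label order for $R$), reassemble into a single annular graph whose wrapping pattern matches $\Gamma$ edge-by-edge. In Jones' planar $F$-case there is only one order to reconcile and the two trees share the common leftmost root; here the roots $v_L$ and $v_1$ are distinct, and the identification of $\Gamma(R)$'s vertices with those of $\Gamma(S)$ is governed by the shift $k=l(v_L)$. I would handle this by tracking, for each upper edge, the positions of its endpoints relative to the origin and to $v_L$, and showing that the wrap prescribed by the label comparison agrees with the embedded edge, with the degenerate case $k=1$ (where $v_L=v_1$ and no wrapping occurs) recovering Jones' planar statement as a sanity anchor. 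The remaining point, that each parent function yields a genuinely binary tree rather than merely a planar one, is inherited from the planar case and requires no new argument.
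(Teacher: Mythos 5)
Your proof is correct and takes essentially the same approach as the paper: both directions reduce to Jones' planar correspondence by splitting $\Gamma$ into the lower subgraph $\Gamma_{-}$ (identified with $\Gamma(S)$) and the upper subgraph read in label order (identified with $\Gamma(R)$ --- the paper formalizes your ``cyclic label order'' device as a re-embedding $\Gamma_{+}'$ of $\Gamma_{+}$ in $\R^{2}$), and both recover the triple in the converse by setting $k=l(v_{L})$. The wrapping-consistency check you flag as the main obstacle is also left implicit in the paper's proof, so your write-up is, if anything, slightly more explicit about where that work lies.
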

\begin{proof} Suppose $\Gamma= \Gamma_{\mathbb{A}}(g)$, where $g=(R,S;k)$. By construction, $|e^{up}_{-}|=|e^{down}_{+}|=0$. 

It remains to show $\sum_{v \in V(\Gamma)\setminus v_\ell}|1-|e^{in}_{v} \cap e^{down}||=\sum_{v \in V(\Gamma)\setminus v_{1}} |1-|e^{<}_{v}\cap e^{up}||=0.$ Beginning with the first quantity, let $\Gamma_{-}$ denote the subgraph of $\Gamma$ whose edges have interiors in the lower half plane. Since $\Gamma_{-}=\Gamma(S)$ and $\Gamma(R,S)$ is Thompson, $\sum_{v\in V(\Gamma)\setminus v_{L}}|1-|e^{in}_{v}\cap e^{down}||=0$. 

It remains to show that \[ \sum_{v \in V(\Gamma)\setminus v_{1}} |1-|e^{<}_{v}\cap e^{up}||=0.\] Define $\Gamma_{+}$ analogously to $\Gamma_{-}$. Because of the edges wrapping around the annulus, $\Gamma_{+}\neq \Gamma(R)$, but we can recover $\Gamma(R)$ from $\Gamma_{+}$. This is accomplished by embedding $\Gamma_{+}$ in $\R^{2}$ so that labels of the edges increase from left to right. Call this embedding $\Gamma_{+}'$ and observe that $\Gamma_{+}'=\Gamma(R)$.
\begin{center}
\includegraphics[scale=0.4]{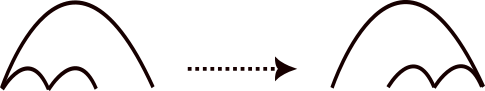}\put(-147,-12){$3$}\put(-135,-12){$4$}\put(-120,-12){$1$}\put(-105,-12){$2$}\put(-50,-12){$1$}\put(-32,-12){$2$}\put(-18,-12){$3$}\put(-5,-12){$4$}\put(-153,22){$\Gamma_{+}$}\put(-55,22){$\Gamma_{+}'$}
\end{center} Letting $v_{L}'$ refer to the leftmost vertex of $\Gamma_{+}'$, we have 

\[\sum_{v\in V(\Gamma_{+}')\setminus \{v_{L}'\}}|1-|e^{up}\cap e^{in}_{v}||=\sum_{v \in V(\Gamma)\setminus v_{1}}|1-|e^{up} \cap e^{<}_{v}|| .\] Since $\Gamma(R,S)$ has Thompson badness zero, the left hand side must be zero. Therefore, $ATB(\Gamma_{\mathbb{A}}(g))=0$. \\ \\
Conversely, let $\Gamma$ be a graph with $ATB(\Gamma)=0$. Let $(R,S)$ be the unique pair of trees corresponding to $(\Gamma_{+}', \Gamma_{-})$. Then $\Gamma=\Gamma_{\mathbb{A}}(g)$ where $g=(R,S;l(v_{L}))$.
\end{proof}

\section{Proof of Theorem \ref{thm: main}}\label{sec: proof}
This section uses Annular Thompson Badness to prove \ref{thm: main}. Given a general graph $\Gamma$, we wish to find a graph $\Gamma'$ such that $L_{\mathbb{A}}(\Gamma)\simeq L_{\mathbb{A}}(\Gamma')$, with $ATB(\Gamma')<ATB(\Gamma)$. For this we use Jones' definition of $2$-equivalence \cite{jones14}. 
 
\subsection{$2$-equivalence} Two edge-signed planar graphs are defined to be $2$-equivalent if they are related by a finite sequence of three moves, which Jones calls $2$-moves. If two graphs $\Gamma,\Gamma'$ are $2$-equivalent then their associated links $L(\Gamma)$ and $L(\Gamma')$ are isotopic. 

The first $2$-move is the addition of a $1$-valent vertex, which corresponds to a Reidemeister move of type I. The remaining two $2$-moves, each corresponding to Reidemeister moves of Type II,  are shown in Figure \ref{fig: 2moves}.
\\
\begin{center}
\begin{figure}[H]
\includegraphics[scale=0.3]{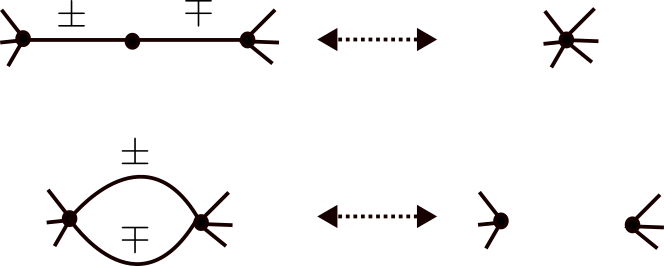}\put(-275, 65){Type IIa}\put(-275,10){Type IIb}
\caption{Moves of Type IIa and IIb as introduced by Jones in \cite{jones14}.}\label{fig: 2moves}
\end{figure}
\end{center}

Jones uses $2$-moves to show that every link has a diagram whose Tait graph has Thompson Badness zero, and thus can be built from an element of the Thompson group. The proof of Theorem \ref{thm: main} will use an analogous strategy to reduce Annular Thompson Badness of a given edge-signed graph $\Gamma \hookrightarrow \mathbb{A}$. To accomplish this we wish to calculate $ATB$ of any graph $\Gamma$, but so far the definition of $ATB(\Gamma)$ requires that $\Gamma$ is $ATB$-friendly. The following lemma takes care of this.

\begin{lemma}\label{lemma:friendly} Let $\Gamma \hookrightarrow \mathbb{A}$ be an edge-signed graph. Then $\Gamma$ is $2$-equivalent to an $ATB$-friendly graph.
    
\end{lemma}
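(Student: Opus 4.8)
The plan is to show that any edge-signed graph $\Gamma \hookrightarrow \mathbb{A}$ can be brought into $ATB$-friendly form by a finite sequence of $2$-moves, which by the preceding discussion preserves the isotopy type of $L_{\mathbb{A}}(\Gamma)$. The three defining conditions for $ATB$-friendliness are: no loops, all vertices on the $x$-axis, and every edge having its interior entirely in one half-plane. I would address these one at a time, in an order chosen so that fixing a later condition does not disturb an earlier one.

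First I would dispose of loops. An edge-signed loop at a vertex $v$ corresponds, in the link diagram, to a crossing that can be removed (or converted to a harmless configuration) by a Reidemeister-I type move; concretely, the addition/deletion of a $1$-valent vertex is exactly the first $2$-move, and I would use this (together with Type IIa/IIb as needed) to eliminate each loop. Since there are finitely many loops, this terminates. Next I would arrange that all vertices lie on the $x$-axis. Because $\Gamma$ is embedded in the annulus $\mathbb{A} = \mathbb{D}^{1}\setminus\{(0,0)\}$, I can isotope the embedding so that the vertices are pushed radially onto the $x$-axis (choosing, say, the positive and negative $x$-rays emanating from the removed origin, so that the two ``sides'' of the annulus are respected). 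This is an ambient isotopy of $\mathbb{A}$ and hence does not change $L_{\mathbb{A}}(\Gamma)$ at all; no $2$-move is even required for this step, only a reparametrization of the embedding.

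The substantive step is the third condition: making each edge lie entirely in the upper or lower half-plane. After the vertices are on the $x$-axis, a generic edge will cross the $x$-axis some finite number of times. At each such crossing point I would insert a new vertex (via a Type II $2$-move, splitting the edge and introducing a short arc), so that the edge is subdivided into sub-arcs each of which lies in a single half-plane; the newly created vertices can then be nudged onto the $x$-axis as in the previous step. The main obstacle here is bookkeeping with the annular structure: an edge may wind around the puncture, and I must ensure that the subdivision and the half-plane assignment are compatible with the cyclic (rather than linear) ordering of vertices that the annulus imposes, and that the edge-signs are carried along correctly by the $2$-moves. I expect this winding/interaction-with-the-puncture issue to be the one requiring genuine care, whereas the loop removal and the placement of vertices are essentially routine. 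Since each edge meets the $x$-axis finitely often and there are finitely many edges, the whole procedure is finite, yielding an $ATB$-friendly graph $2$-equivalent to $\Gamma$.
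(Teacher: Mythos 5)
Your second and third steps are essentially the paper's proof: the vertices are placed on the $x$-axis by an isotopy of the embedding (no $2$-move needed), and each point where an edge crosses the axis is handled by a Type IIa insertion of a cancelling pair of opposite-signed edges ($+$ above the axis, $-$ below), which subdivides the edge into arcs each lying in a single half-plane while certifying $2$-equivalence (collapsing the inserted pair recovers the original graph). But your first step, the elimination of loops, has a genuine gap. You propose to remove a loop by a Reidemeister-I argument, using the Type I move (addition/deletion of a $1$-valent vertex) ``together with Type IIa/IIb as needed.'' Deleting a loop is not one of the three $2$-moves, and it cannot be derived from them: each of the three moves preserves $|E(\Gamma)|-|V(\Gamma)| \pmod 2$ (Type I changes $|E|$ and $|V|$ each by $1$; Type IIa changes each by $2$; Type IIb changes $|E|$ by $2$ and $|V|$ by $0$), whereas deleting a loop changes $|E|$ by $1$ and $|V|$ by $0$. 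So a graph with a loop is never $2$-equivalent to the same graph with the loop deleted, nor to the graph with the loop traded for a pendant edge. The fact that a loop corresponds to a removable kink in the link diagram does not help: $2$-equivalence is a combinatorial relation on signed graphs that \emph{implies} link isotopy, not conversely, so link-level Reidemeister reasoning cannot be imported as a graph move.

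The repair is the same mechanism you already use for axis-crossing edges, and it is exactly what the paper does: apply a Type IIa insertion to the loop itself, inserting a cancelling pair of opposite-signed edges ($+$ above the axis, $-$ below) at an interior point of the loop. This turns the loop into a cycle of honest (non-loop) edges, each with interior in a single half-plane and with signs that contribute nothing to $e^{up}_{-}$ or $e^{down}_{+}$; the cycle, and hence the kink crossing, is retained rather than deleted, so the link is unchanged. With that replacement your outline coincides with the paper's proof. Your worry about edges winding around the puncture needs no special treatment: all moves are local, and an edge running over the origin from one ray of the $x$-axis to the other while staying in a single half-plane is already permitted in an $ATB$-friendly graph.
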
 
\begin{proof} Begin by arranging all vertices on the $x$-axis, which is always possible. At this point, if $\Gamma$ is $ATB$-friendly we are done. Otherwise, both loops and edges whose interiors are in both the upper and lower half-plane can be corrected with moves of Type IIa:
\begin{center}
\includegraphics[scale=0.4]{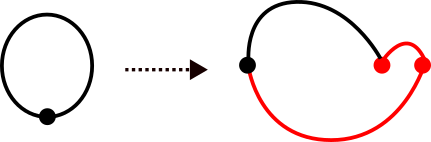}\put(-25, -6){\color{red}$-$}\put(-15,33){\color{red} $+$};\qquad \includegraphics[scale=0.4]{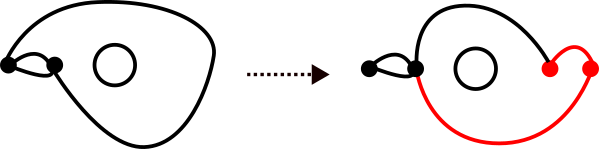}\put(-25, -5){\color{red}$-$}\put(-15,35){\color{red} $+$}.
\end{center}
\end{proof}
Therefore, for any graph $\Gamma \hookrightarrow \mathbb{A}$, we define $ATB(\Gamma):=ATB(\Gamma')$ where $\Gamma'$ is obtained from $\Gamma$  as in the proof of Lemma \ref{lemma:friendly}.

\begin{rmk} The following, when applied inductively, proves Theorem \ref{thm: main}.
\end{rmk}

\begin{theorem} Let $\Gamma \hookrightarrow \mathbb{A}$ be an edge-signed graph. If $ATB(\Gamma)\neq0$, there exists $\Gamma'$ such that $\Gamma'$ is 2-equivalent to $\Gamma$, and $ATB(\Gamma') < ATB(\Gamma)$.
\end{theorem}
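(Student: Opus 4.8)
The plan is to follow Jones' strategy of lowering badness by a finite sequence of $2$-moves, organized around the decomposition $ATB(\Gamma)=D+U+S$, where $D=\sum_{v\neq v_L}\bigl|1-|e^{in}_v\cap e^{down}|\bigr|$ is the down-defect, $U=\sum_{v\neq v_1}\bigl|1-|e^<_v\cap e^{up}|\bigr|$ is the up-defect, and $S=|e^{up}_-|+|e^{down}_+|$ is the sign-defect. By Lemma \ref{lemma:friendly} I may assume $\Gamma$ is $ATB$-friendly. If $ATB(\Gamma)\neq 0$ then at least one of $D$, $U$, $S$ is positive, and the goal is to exhibit a graph $2$-equivalent to $\Gamma$ on which the total is strictly smaller; since $2$-equivalence permits a finite sequence of moves, it suffices that the net effect be a strict decrease.

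The mechanism I would use rests on understanding how each move acts on the three terms. A Type I move attaches a pendant (one-valent) vertex along a single edge; placed with the correct sign and half-plane it is neutral for $S$ and merely transports a single valence-defect from one endpoint of the new edge to the other, since the pendant vertex itself then carries the defect. A Type II move of the form appearing in Lemma \ref{lemma:friendly} attaches a bigon consisting of a positive edge above the axis and a negative edge below it between two vertices; this is neutral for $S$, and at the higher-labelled endpoint it simultaneously changes $|e^{in}_v\cap e^{down}|$ and $|e^<_v\cap e^{up}|$ by one, so it can cancel a matched pair of down- and up-defects, or be run in reverse to remove an excess. The plan is therefore, for each nonzero defect, to transport it by neutral moves toward the seam --- a down-defect toward $v_L$ and an up-defect toward $v_1$, these being exactly the vertices excluded from the two sums --- where it is absorbed, or else to bring two complementary defects together and annihilate them by a Type II bigon, in either case netting a strict decrease. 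Sign-defects are removed by the same local Type II moves Jones uses, which are insensitive to the puncture.

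To handle the up-defect and the edges that wrap around $\mathbb{A}$, I would pass to the relabelled planar model from the proof of Proposition \ref{prop: ATB}: re-embedding $\Gamma_+$ so that edge-labels increase from left to right yields $\Gamma_+'$, on which $\sum_{v\neq v_1}\bigl|1-|e^<_v\cap e^{up}|\bigr|$ becomes the ordinary planar up-defect of $\Gamma_+'$ relative to its leftmost vertex. Running Jones' planar transport-and-annihilate procedure on $\Gamma_+'$ and pulling the resulting moves back across the re-embedding produces the desired annular moves; the down-defect is treated identically on $\Gamma_-$, which is already planar and has $v_L$ as its leftmost vertex.

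The main obstacle is coordinating these transports so that reducing one term does not create a defect of another type, and checking the bookkeeping at the seam. Because the two sums exclude different vertices ($v_L$ versus $v_1$), the down- and up-transports head to different absorbing points and must be arranged not to interfere, and I must verify that the pull-back of a planar move on $\Gamma_+'$ is a genuine annular $2$-move whose wrapping matches the convention that edges to lower-labelled vertices wrap counterclockwise. Confirming that each step can be carried out away from the puncture --- or, when a transport must cross the seam, that the resulting change of leftmost or first vertex is correctly accounted for in $D$ and $U$ --- is the crux; away from the seam the argument is identical to Jones'.
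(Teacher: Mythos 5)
Your proposal has the right skeleton --- the same decomposition of $ATB$ into the down-defect, up-defect, and sign-defect, and the same overall plan of reducing each term by Jones-style $2$-moves with special care near the puncture --- but it stops exactly where the actual content of the theorem begins. The key device you propose for the up-defect, namely running Jones' planar procedure on the re-embedded graph $\Gamma_{+}'$ and ``pulling the moves back,'' does not work as stated: $\Gamma_{+}'$ is an auxiliary planar graph used in the proof of Proposition \ref{prop: ATB} only to \emph{compute} $\sum_{v\neq v_{1}}|1-|e^{<}_{v}\cap e^{up}||$, and a planar $2$-move performed on $\Gamma_{+}'$ need not pull back to a legitimate annular $2$-move on $\Gamma$, because the new edge may have to wrap around the puncture and the planar isotopy witnessing the move may sweep across the origin. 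This is precisely where the paper has to construct new moves rather than quote Jones: in Case 1, Step 3, a separate modification is given when the defective vertex is $v_{L}$; in Case 2 a separate modification is given when the terminal vertex of the positive lower edge is $v_{1}$, with the $m$ edges arriving from across the origin explicitly tracked because they do not lie in $e^{<}_{v}$; and in Case 3 the assumption $|e^{in}_{v}\cap e^{up}|=1$, which Jones takes for granted, can fail on the right side of $\mathbb{A}$, so the paper first performs an isolation move (a combination of Type I, IIa, IIb moves) and then, when the problematic negative edge itself stretches over the origin, applies yet another new modification with its own $2$-equivalence verification. None of these constructions appears in your proposal; the ``crux'' you name in your final paragraph is the proof, not a detail to be checked.

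The second gap is the interference you flag but do not resolve. Fixing the down-defect can genuinely raise the up-defect: the paper exhibits a vertex with an outgoing edge stretching over the origin for which Step 2 increases $|e^{<}_{v}\cap e^{up}|$ from $1$ to $2$. The resolution is not to ``arrange transports not to interfere'' but to order the steps globally: first drive the down-defect to zero (Steps 1--2, accepting a possible increase in the up-defect), then drive the up-defect to zero (Steps 3--4) using moves checked to leave the down-defect and the sign terms unchanged, so that at the end of Case 1 the whole valence contribution is zero and $ATB$ has strictly decreased. Since your argument needs a \emph{strict} net decrease, and your intermediate moves can raise one term while lowering another, this global accounting is essential and is missing from your outline. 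Your claims about Type I moves ``transporting'' defects and Type II bigons ``annihilating matched pairs'' would also need verification against the definition of $ATB$; the paper never argues this way, instead checking for each explicit modification which of the four quantities change and by how much.
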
 This proof can be thought of as an extension of Jones' proof of \cite[Lemma $5.3.13$]{jones14} to the annular case. 
\begin{proof} 
To begin, we split into three cases, based on what is causing $ATB(\Gamma) > 0$: \begin{enumerate}
    \item $\sum_{v\in 
 v(\Gamma)\setminus v_{L}}|1-|e^{in}_{v} \cap e^{down}||+\sum_{v \in v(\Gamma)\setminus v_{1}} |1-|e^{<}_v \cap e^{up}||>0$.
 \item The above quantity is zero but  $|e^{up}_{-}|>0$.
 \item The above quantities are zero but $|e^{down}_{+}|>0$.
\end{enumerate} \textbf{Case 1:} The following four-step process will reduce \[\sum_{v\in 
 v(\Gamma)\setminus v_{L}}|1-|e^{in}_{v} \cap e^{down}||+\sum_{v \in v(\Gamma)\setminus v_{1}} |1-|e^{<}_v \cap e^{up}||\] to zero while preserving $2$-equivalence.\\
 \textbf{Case 1, Step 1:} For each $v \in v(\Gamma)\setminus v_{L}$ such that $|e^{in}_v \cap  e^{down}|=0$, let the vertex immediately to the left of $v$ be called $w$ and proceed as in \cite{jones14} regardless of whether $v=v_{1}$:
\\
\begin{center}
\includegraphics[scale=0.5]{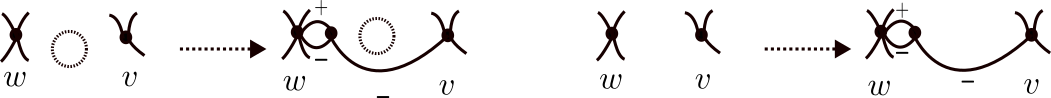}\put(-207,20){or}.
\end{center} The new vertex does not impact $ATB$, and the vertex $v$ now has $|e^{in}_{v} \cap e^{down}|=1$. Each time this step is applied to a relevant vertex, $\sum_{v\in 
 v(\Gamma)\setminus v_{L}}|1-|e^{in}_{v} \cap e^{down}||$ decreases by 1, and all other quantities remain unchanged, so $ATB$ decreases by $1$. \\ \\ \textbf{Case 1, Step 2:} For each $v \in v(\Gamma)\setminus v_{L}$ such that $|e^{in}_{v}\cap e^{down}_{v}|>1$, proceed as in \cite{jones14}:
\begin{center}
\includegraphics[scale=0.4]{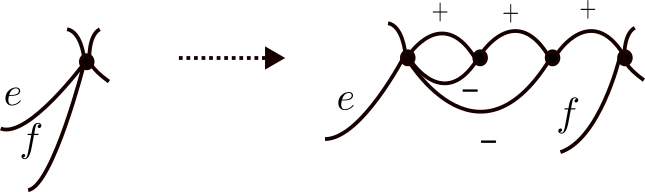}.
\end{center}
Although applying Step $2$ to all relevant vertices will ensure $\sum_{v\in v(\Gamma')\setminus v_{L}}|1-|e^{in}_v \cap e^{down}||=0$, it may increase $\sum_{v \in v(\Gamma')\setminus v_{1}}|1-|e^{<}_{v} \cap e^{up}||$. Take, for example, a vertex $v$ with an outgoing edge stretching over the origin:\\
\begin{center}
    \includegraphics[scale=0.4]{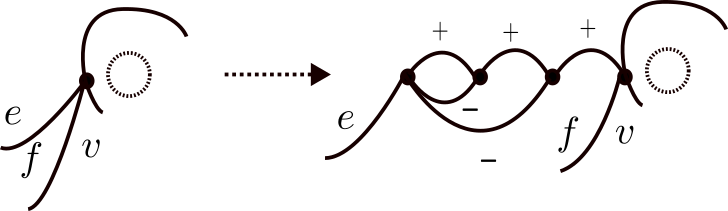}.
\end{center}
In this example, Step $2$ increases $|e^{<}_{v} \cap e^{up}|$ from $1$ to $2$. This will be addressed momentarily in Step $4$, but at this point steps $1$ and $2$ have reduced $\sum_{v \in v(\Gamma)\setminus v_{L}}|1-|e^{in}_{v} \cap e^{down}||$ to $0$. If we also have that $\sum_{v \in v(\Gamma)\setminus v_{1}}|1-|e^{<}_{v} \cap e^{up}||=0$, we are done. Otherwise proceed to step $3$.  \\ \\
\textbf{Case 1, Step 3:} We wish to deal with vertices $v \in V(\Gamma) \setminus v_{1}$ for which $|e^{<}_{v} \cap e^{up}_{v}|=0$. If $v \neq v_{L}$, let $w$ refer to the vertex immediately to the left of $v$ and proceed as in \cite{jones14}:\\
\begin{center}
\includegraphics[scale=0.4]{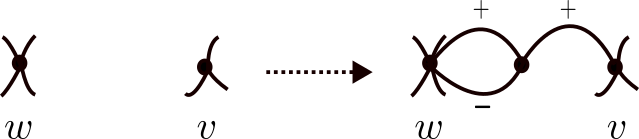}
\end{center} If $v=v_{L}$, modify the graph as follows: \\
\begin{center}
\includegraphics[scale=0.4]{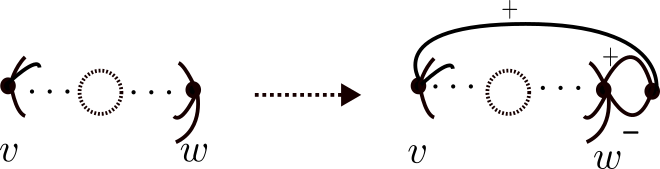}
\end{center} In both cases, $\sum_{v \in v(\Gamma)\setminus v_{1}}|1-|e^{<}_{v} \cap e^{up}||$ decreases by $1$ and all other quantities remain unchanged. Therefore each time this step is applied to a relevant vertex, $ATB$ decreases by $1$.

\textbf{Case 1, Step 4:} We wish to deal with vertices $v \in v(\Gamma) \setminus v_{1}$ for which 
$|e^{<}_{v} \cap e^{up}|>1$. This can happen one of four ways. In any case, proceed as in \cite{jones14}:\\
\begin{center}
    \includegraphics[scale=0.6]{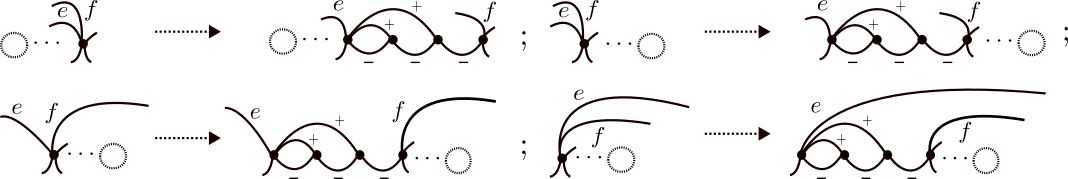}.
\end{center}In each of the four cases, $\sum_{v \in v(\Gamma)\setminus v_{1}}|1-|e^{<}_{v} \cap e^{up}||$ decreases by $1$ and all other quantities remain unchanged. Therefore each time this step is applied to a relevant vertex, $ATB$ decreases by $1$.
After these four steps, we have $\sum_{v\in v(\Gamma)\setminus v_{L}}|1-|e^{in}_{v} \cap e^{down}||+\sum_{v\in v(\Gamma)\setminus v_{1}} |1-|e^{<}_v \cap e^{up}||=0$, and $|e^{up}_{-}| + |e^{down}_{+}|$ remains unchanged. Therefore $ATB$ has been reduced, and this concludes Case $1$. \\ \\
\textbf{Case 2:} We have $\sum_{v\in v(\Gamma)\setminus v_{L}}|1-|e^{in}_{v} \cap e^{down}||+\sum_{v\in v(\Gamma)\setminus v_{1}} |1-|e^{<}_v \cap e^{up}||=0$ and $|e^{down}_{+}|>0$. From now on, only edges in $|e^{up}_{-}|\cup |e^{down}_{+}|$ will be pictured with a sign; the rest are understood to be in $|e^{down}_{-}| \cup |e^{up}_{+}|$. Fix an edge in $e' \in e^{down}_{+}$ and call its terminal vertex $v$. We further split into two cases, depending on whether $v=v_{1}$. Let $w$ refer to the vertex immediately to the left of $v$. If $v \neq v_{1}$, proceed as in \cite{jones14}:\\
\begin{center}
\begin{figure}[H]
\includegraphics[scale=0.4]{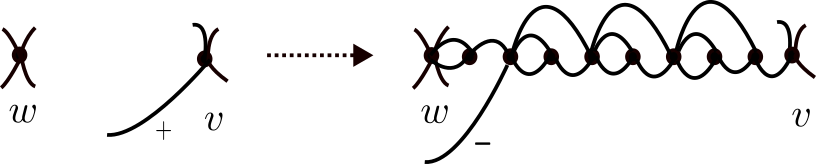}
\caption{}\label{fig: edown fix}
\end{figure}
\end{center}
If $v=v_{1}$, proceed as follows:  \\
\begin{center}
\begin{figure}[H]
\includegraphics[scale=0.4]{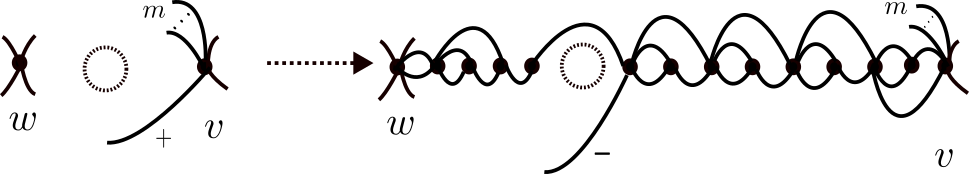}.
\caption{}
\label{fig: extra edges}
\end{figure}
\end{center} Note that $m$ denotes the number of edges coming into $v$ from the other side of the annulus. Distinguishing these edges in the picture is necessary because, by virtue of stretching over the origin, they are not in $e^{<}_{v}$ and will not affect $ATB$. 

One may wonder why we treat $v=v_{1}$ differently from $v\neq v_{1}$. The following picture depicts what would have happened if we did not. \\
\begin{center}
    \includegraphics[scale=0.4]{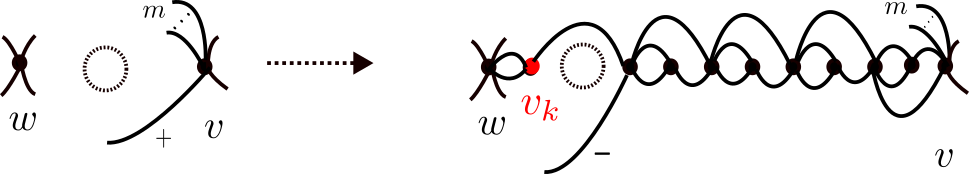}
\end{center} The vertex $v_{k}$ now has $|e^{<}_{v_{k}} \cap e^{\text{up}}|=2$ which increases $ATB$ by $1$ and necessitates a correction as in Case 1, step 4, bringing us back to Figure \ref{fig: extra edges}.

To see how Figure \ref{fig: edown fix} preserves $2$-equivalence, use Type IIb moves to remove cancelling $2$-cycles with opposite signs, then use Type I moves to eliminate 1-valent vertices, and finally use Type IIa moves to collapse cancelling edges, pictured in red, so that the original edge, pictured in blue, remains. A similar sequence of moves demonstrates the $2$-equivalence for Figure \ref{fig: extra edges}.\\
\begin{center}
\includegraphics[scale=0.55]{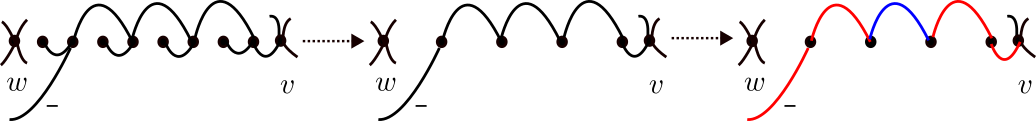}
\end{center} 

The modifications in Figures \ref{fig: edown fix} and \ref{fig: extra edges} decrease $|e^{down}_{+}|$ by $1$ and do not impact the quantity $\sum_{v\in 
v(\Gamma)\setminus v_{L}}|1-|e^{in}_{v} \cap e^{down}||+\sum_{v \in v(\Gamma)\setminus v_{1}} |1-|e^{<}_v \cap e^{up}||$ or $|e^{up}_{-}|$. Therefore each time this modification is applied to an edge in $e^{down}_{+}$, $ATB$ decreases by $1$ and any edge in $e^{down}_{+}$ can be corrected.

\textbf{Case 3}: All other quantities relevant to $ATB(\Gamma)$ are zero, but $|e^{up}_{-}|>0$. Once again we further split into cases, depending on which side of $\mathbb{A}$ contains the terminal vertex $v$ of a problematic edge $e'$.

If $v$ is on the left side of $\mathbb{A}$, let $w$ refer to the vertex immediately to the left of $v$ and proceed as in \cite{jones14}:
\begin{center}
\includegraphics[scale=0.4]{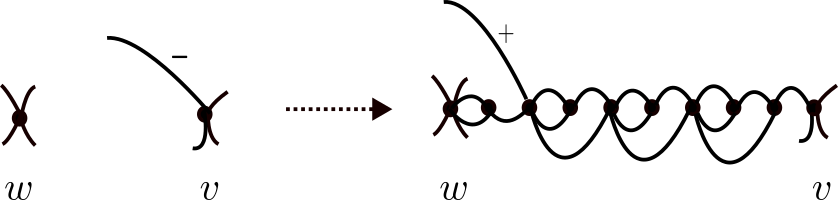}.
\end{center} A key fact that makes the above work is that as long as $v$ is on the left side of $\mathbb{A}$ , we have $|e^{in}_{v} \cap e^{up}|=1$, which was assumed in \cite{jones14}. When this is not true, a different correction will be required; specifically, if instead $e'$ terminates at some $v$ on the right side of $\mathbb{A}$, we may have that $|e^{in}_{v} \cap e^{up}|>1$, due to any number of edges entering $v$ from above which stretch over the origin. We must further divide into two cases, based on whether $e'$ itself stretches over the origin.

If not, perform the following combination of moves of Type I, IIa, and IIb, finitely many times until $|e^{in}_{v} \cap e^{up}|=1$: 
\begin{center}
\includegraphics[scale=0.4]{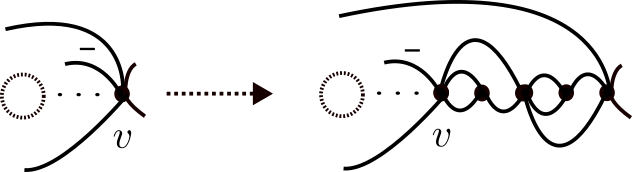}.
\end{center} Then, we may proceed as in the previous case.
If, on the other hand, $e'$ does stretch over the origin, apply the above modification to isolate $e'$ from all other edges in $|e^{in}_{v} \cap e^{up}|$ stretching over the origin. Once the problematic edge is isolated, one may modify the graph as follows,
\begin{center}
\includegraphics[scale=0.4]{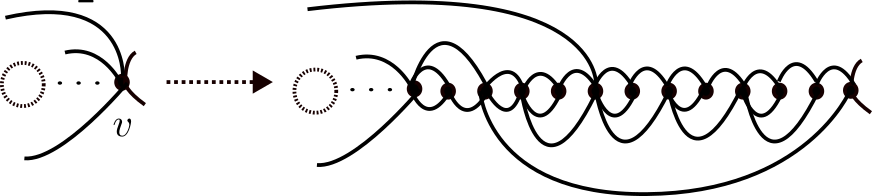}.
\end{center} To see $2$-equivalence, use Type IIb moves to remove cancelling $2$-cycles, then use Type I moves to remove $1$-valent vertices. Lastly apply Type IIa moves to collapse the three pairs of cancelling edges pictured below in red, blue, and green.
\begin{center}
\includegraphics[scale=0.5]{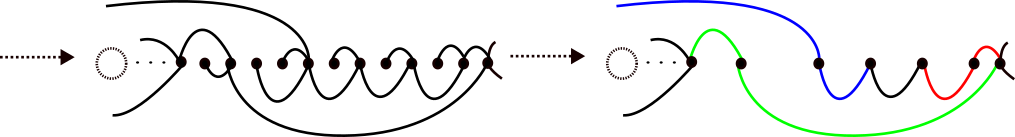}
\end{center} 

The modifications in this step reduce $|e^{up}_{-}|$ by $1$ and do not affect any other quantities relevant to $ATB$. This concludes Case 3. These three cases demonstrate that any edge-signed graph embedded in $\mathbb{A}$ with nonzero $ATB$ is $2$- equivalent to a graph with lower $ATB$.
\end{proof}

\subsection{Example: a positive trefoil embedded in $\mathbb{A}$} 

Let $\Gamma \hookrightarrow \mathbb{A}$ be the graph pictured below, which corresponds to the positive trefoil embedded as in Figure \ref{fig: tait graph ex}. The graph $\Gamma \hookrightarrow \mathbb{A}$ has $|e^{down}_{+}|=1$, $|e^{up}_{-}|=0$, $|e^{in}_{v_{3}} \cap e^{down}|=0$, $|e^{in}_{v_{1}} \cap e^{down}|=1$, $|e^{<}_{v_{2}} \cap e^{up}|=0, |e^{<}_{v_{3}} \cap e^{up}|=2$. Following the process outlined in Theorem \ref{thm: main} leads to the element $g=(R,S;k) \in T$ pictured below, and $\mathcal{L}_{\mathbb{A}}(g)=L_{\mathbb{A}}(\Gamma)$.\\
\begin{center} 
\begin{figure}[H]
\includegraphics[scale=0.5]{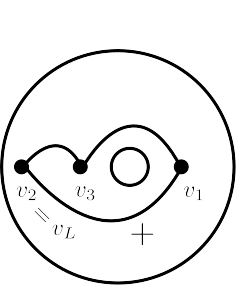} \put(-66,96){$\Gamma \hookrightarrow \mathbb{A}$}
\hspace{0.4in} 
\includegraphics[scale=0.5]{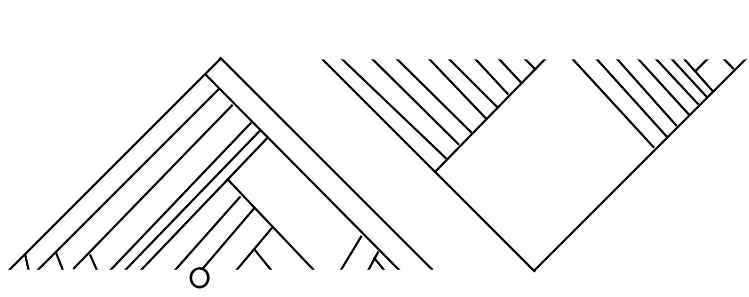}\put(-82,100){$S$}\put(-205,100){$R$}
\caption{The triple $(R,S;k)$ such that $\mathcal{L}_{\mathbb{A}}(R,S;k)=L_{\mathbb{A}}(\Gamma).$}
\end{figure}
\end{center}

\bibliographystyle{amsplain}
\bibliography{biblio.bib}

\end{document}